\newtheorem{Theorem}{Theorem}[section]
\newtheorem{Definition}[Theorem]{Definition}
\newtheorem{Lemma}[Theorem]{Lemma}
\newtheorem{Remark}[Theorem]{Remark}
\newtheorem*{Theorem*}{Theorem}
\newcommand{\cvd}{\hfill$\square$ \bigskip}
\newcommand{\cc}{\mathcal {C}}
\newcommand{\hl}{\mathcal {L}}
\newcommand{\hh}{\mathbb H}
\newcommand{\rr}{\mathbb R}
\begin{document}

\title{Constant mean curvature hypersurfaces in $\hh^n\times\rr$ with small planar boundary}

\author{\sc Barbara Nelli \footnote{Barbara Nelli, Department of Information Engineering, Computer Science and Mathematics, Universit\`a degli Studi dell'Aquila, via Vetoio 1, 67100 L'Aquila, Italy. E-mail: barbara.nelli@univaq.it} \& Giuseppe Pipoli \footnote{Giuseppe Pipoli, Department of Information Engineering, Computer Science and Mathematics, Universit\`a degli Studi dell'Aquila, via Vetoio 1, 67100 L'Aquila, Italy. E-mail: giuseppe.pipoli@univaq.it}}
\date{}

\maketitle

%\vspace{1,5cm} 

{\small \noindent {\bf Abstract:}
We show that  constant mean curvature hypersurfaces  in ${\mathbb H}^n\times\rr$, with small and pinched boundary  contained in a horizontal slice  $P$  are topological disks, provided  they are contained in one of the two halfspaces determined by $P.$
This is the analogous in ${\mathbb H}^n\times\rr$ of a result in $\rr^3$ by  A. Ros and H. Rosenberg \cite[Theorem 2]{RR}.
%The reader will also find in this article new examples of constant mean curvature hypersurfaces invariant by a hyperbolic translation of $\hh^n$ (see \cite{Man, On} for the case $n=2$).

}
 
\medskip

 \let\thefootnote\relax\footnote{The authors were partially supported by INdAM-GNSAGA.}

\noindent {\bf MSC 2020 subject classification:}  { 53A10, 53C42}

\noindent{{\bf Keywords:}  mean curvature, lima\c con, topological type, hyperbolic space.}

\bigskip

\section{Introduction}

There is little known about  the topological and geometrical structure of constant mean curvature hypersurfaces  with convex  boundary. 

For example, it is unknown if  a surface embedded in $\rr^3,$  with boundary a circle and constant mean curvature,  is isometric to a spherical cap.
During the years, there have been partial results concerning this problem.  Let us recall the ones that we consider  the deepest.

The result by Brito, W. Meeks, H. Rosenberg  and R. Sa Earp \cite{BMRS}, yields  that an embedded constant mean curvature surface with boundary a circle in a plane $P$, is a spherical cap, provided it is transverse to $P$ along the boundary. 
In fact the authors are able to prove that  the transversality condition forces the surface to  stay in one 
of the two halfspaces determined by $P.$ Then one can use Alexandrov reflection method to get the result.

Notice that, A. D. Alexandrov states that a closed, embedded   surface  with constant mean curvature in $\rr^3$ is a round sphere  \cite{H} (see also \cite{N} for a survey about the subject). 
In light of Alexandrov's theorem, it is reasonable to expect that, if the boundary curve of a constant mean curvature surface  $M$ is small, then $M$ is a topological disk. By rescaling, this is analogous to expect that $M$ is topologically a disk if the mean curvature of $M$ is small when compared with the curvature of it's boundary.

 Indeed, A. Ros and H. Rosenberg \cite{RR}   showed that if $\Gamma$ is a  convex curve contained in the plane $P$,  the mean curvature $H$ of an embedded constant mean curvature surface $M$ is small when compared with the curvature of $\Gamma$ and if $M$ is contained in the halfspace bounded by $P$, then $M$ is a topological disk. 
Their result was extended in the hyperbolic $3$-space $\mathbb{H}^3$ by B. Semmler \cite{S}, and in $\rr^n,$ for all symmetric functions of the principal curvatures,  by B. Nelli and B. Semmler \cite{NS}.

In this paper we extend Ros-Rosenberg result to constant mean curvature  hypersurfaces in ${\mathbb H}^n\times\rr$.
Namely, we prove the following theorem (see Theorem \ref{theorem-main}). The notion of $r_{ext}$ (exterior radius) and $r_{int}$ (interior radius) in the statement, are intuitively clear. For a precise definition, as well as for the definition of horoconvexity,  see the beginning of Section \ref{proof}.

\begin{Theorem*}
Let $M$  be a $n$-dimensional hypersurface of $\hh^n\times\rr$ embedded in  $\hh^n\times]0,\infty]$ with constant mean curvature $H>\frac{n-1}{n}$ and boundary $\partial M=\Gamma$. Assume that $\Gamma$ is a closed $(n-1)$-dimensional horoconvex hypersurface of the slice  $P=\hh^n\times\{0\}$ satisfying the pinching $2r_{int}>r_{ext}$. Then there is a constant $\delta(n,H)>0$ depending only on $n$ and $H$ such that if $r_{ext}\leq\delta(n,H)$, then $M$ is topologically a disk.
Moreover either  $ M$ is a graph over the domain $\Omega$ bounded by $\Gamma$ or $N= M\cap(\Omega\times]0,\infty[)$ is a graph over $\Omega$ and $ M\setminus N$ is a graph over $\Gamma\times]0,\infty[$ with respect to the lines orthogonal to $\Gamma\times]0,\infty[.$
\end{Theorem*}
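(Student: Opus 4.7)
Let $\Omega\subset P$ be the horoconvex domain with $\partial\Omega=\Gamma$. The hypothesis $H>\frac{n-1}{n}$ is precisely the range in which embedded rotational CMC hyperspheres $\sss_H\subset\hh^n\times\rr$ of mean curvature $H$ exist, and these will serve as geometric barriers via the tangency principle. The plan is to follow the Ros--Rosenberg scheme \cite{RR} adapted to the product geometry: first enclose $M$ in a short slab above $P$ by comparison with $\sss_H$; then perform Alexandrov reflection across horizontal slices to extract a graphical structure from above; finally, use vertical geodesic reflections, controlled by the horoconvexity of $\Gamma$ and the pinching $2r_{int}>r_{ext}$, to settle the picture near $\Gamma$ and recover either the spherical cap or the lima\c{c}on alternative.

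\textbf{Slab estimate and horizontal reflection.} The first quantitative input is a height estimate: $M\subset\hh^n\times[0,h_0]$ with $h_0=h_0(n,H,r_{ext})\to 0$ as $r_{ext}\to 0$. This is obtained by sliding translates of $\sss_H$ vertically down from above $M$; an interior contact is forbidden by the tangency principle, and a boundary contact with $\Gamma$ forces the sliding sphere to sit already low, since $\Gamma$ is contained in a ball of radius $r_{ext}\leq\delta(n,H)$. With $M$ confined to a thin slab, I reflect the upper cap $M^+_t=M\cap\{z\geq t\}$ across the slice $P_t$ and let $t$ decrease from $h_0$. As long as the reflected cap lies strictly above $M\setminus M^+_t$, the process continues; if it reaches $t=0^+$ without interior contact, then $M$ is a vertical graph over $\Omega$, which is case (a) of the theorem.

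\textbf{Vertical reflection and the lima\c{c}on alternative.} If the horizontal procedure stops at some $t_*>0$, I switch to moving vertical walls: reflect across $Q\times\rr$ where $Q\subset\hh^n$ is a totally geodesic hyperplane, using a family of such $Q$ parametrized by the points of $\Gamma$, and monitor when the reflected image of $M$ first touches $M$. The assumptions that $\Gamma$ is horoconvex and $2r_{int}>r_{ext}$ are exactly what guarantees that for each such family the reflected image of $\Omega$ lies on a single side of $Q$; this is the natural $\hh^n\times\rr$-analogue of the convex-plus-pinching condition of \cite{RR}. The standard moving plane conclusion then yields that the portion of $M$ outside $\Omega\times(0,\infty)$ is a graph over $\Gamma\times(0,\infty)$ along lines orthogonal to the cylinder, while $N=M\cap(\Omega\times(0,\infty))$ projects injectively to $\Omega$ --- this is case (b). In both cases $M$ has the topology of a disk.

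\textbf{Main obstacle.} The core difficulty is the quantitative choice of the threshold $\delta(n,H)$ so that no reflection argument is spoiled by an unintended interior tangency. This requires balancing three scales: the size $r_{ext}$ of $\Gamma$, the intrinsic radius of $\sss_H$, and the slab height $h_0$. The product geometry adds a subtlety absent from the Euclidean case of \cite{RR}: horoconvexity in $\hh^n$ is strictly weaker than geodesic convexity, so the pinching $2r_{int}>r_{ext}$ must work in combination with the smallness of $r_{ext}$ to ensure that the vertical moving plane actually reaches a critical graphical configuration rather than stopping prematurely. Making this compatibility explicit is what the function $\delta(n,H)$ encodes, and constitutes the technical heart of the argument.
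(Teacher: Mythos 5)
There is a genuine gap, and it is fatal to the proposed architecture: the ``slab estimate'' $M\subset\hh^n\times[0,h_0]$ with $h_0=h_0(n,H,r_{ext})\to 0$ as $r_{ext}\to 0$ is false. The standard counterexample is the large spherical cap: take the rotational sphere $\mathcal S_H$ and remove a small cap near its lowest point, so that $\partial M$ is a geodesic sphere of radius $r_{ext}$ as small as you like, while the height of $M$ stays close to $2h_{\mathcal S}$, a quantity depending only on $n$ and $H$. Your sliding argument does not detect this: when a translate of $\mathcal S_H$ descends onto such an $M$, the first contact is an exterior tangency in which the two mean curvature vectors point in opposite directions, so the tangency principle gives no contradiction and the sphere simply comes to rest on top of $M$ at a height that does not go to zero. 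Everything downstream of the slab estimate (confining $M$ to a thin slab, then running the horizontal reflection to completion) therefore collapses. This is precisely why the paper's proof splits into a \emph{short} case ($h_M<2h_{\mathcal C}$, where $h_{\mathcal C}$ is the half-height of the horizontal CMC cylinder $\mathcal C_H$) and a \emph{tall} case, and why the theorem's conclusion is an alternative rather than the single statement ``$M$ is a graph over $\Omega$'': in the tall case $M$ genuinely consists of an upper graphical cap glued to a horizontal graph over $\Gamma\times]0,\infty[$.

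Two further points. First, even in the short case your sphere comparison is incomplete: before sliding $\mathcal S_H^+$ down one must know that $M$ does not extend horizontally past the boundary sphere of radius $R_{\mathcal S}$, otherwise $\partial\mathcal S_H^+$ hits $M$ during the descent. The paper obtains this horizontal confinement (Claim 1A) by pushing translated half-cylinders $\mathcal C_H^+$ against $M$, and the inequality $R_{\mathcal S}>R_{\mathcal C}$ of Lemma \ref{rs vs rc} is what makes the subsequent sphere comparison legitimate; this step has no analogue in your outline. Second, the tall case requires ingredients you do not mention at all: the quantitative Lima\c con construction of Section \ref{sez-lima} to measure how far the vertical Alexandrov reflection penetrates past $\Gamma$ (this is where the pinching $2r_{int}>r_{ext}$ actually enters, via $2r_{int}-r_{ext}<r_{0,d}<r_{min}$, not to make the reflected image of $\Omega$ lie on one side of the wall), and the immersed rotational annuli $\mathcal A_d$ of mean curvature $\frac{n-1}{n}<H$, which are slid through the interior of $M$ to show $M$ has no interior points in $\Omega\times[0,h^*]$. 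Without these, the topology of $M\cap(\Omega\times[0,h_M/2])$ in the tall case is not controlled, and the disk conclusion does not follow.
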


We emphasize that the assumption $H>\frac{n-1}{n}$ in the previous Theorem  is not restrictive because the following theorem holds.

\begin{Theorem*}
Let $\Omega$  be a connected domain in a horizontal section $P$ of  ${\mathbb H}^n\times{\mathbb R}$, with horoconvex boundary $\Gamma=\partial\Omega$. If $M$ is an embedded constant mean curvature  hypersurface with boundary $\Gamma$ and mean curvature $H\leq \frac{n-1}{n}$, then $M$ is a graph on $\Omega$. 
In particular $M$ is a topological disk.
\end{Theorem*}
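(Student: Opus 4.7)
The plan is to first show, via a Busemann function argument, that $M$ is contained in $\overline{\Omega}\times\rr$ with its interior lying strictly in $\Omega\times\rr$, and then to conclude that $M$ is a graph over $\overline{\Omega}$ by Alexandrov reflection across horizontal slices together with a topological argument.

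For any point at infinity $\xi$ of $\hh^n$, let $b_\xi$ be the Busemann function extended to $\hh^n\times\rr$ as constant in the $\rr$-factor. One has $\Delta b_\xi = n-1$ and $\operatorname{Hess}(b_\xi)$ has eigenvalue $1$ with multiplicity $n-1$ along the horosphere directions and eigenvalue $0$ along $\nabla b_\xi$ and $\partial_z$. Suppose $b_\xi|_M$ admitted an interior maximum at $p^{*}$; then $\nu(p^{*})$ would be parallel to $\nabla b_\xi(p^{*})$, so the identity
\[
\Delta_M b_\xi = \Delta b_\xi - \operatorname{Hess}(b_\xi)(\nu,\nu) + nH\,\nu(b_\xi)
\]
reduces to $(n-1)\pm nH$, strictly positive whenever $0\le H<\frac{n-1}{n}$ and contradicting $\Delta_M b_\xi\le 0$ at a maximum. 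In the borderline case $H=\frac{n-1}{n}$, the tangent principle combined with real-analytic continuation would force $M$ into the horocylinder $\{b_\xi=b_\xi(p^{*})\}\times\rr$, an impossibility since $\Gamma$ is a compact hypersurface while horospheres of $\hh^n$ are not. Hence $b_\xi<\max_\Gamma b_\xi$ strictly on $M\setminus\Gamma$. Varying $\xi$ and using horoconvexity of $\Omega$, we get $M\subset\overline{\Omega}\times\rr$ and, more precisely, $M\setminus\Gamma\subset\Omega\times\rr$.

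Next I would run Alexandrov reflection across the horizontal slices $\hh^n\times\{t\}$, from above by decreasing $t$ from $\max_M z$ and symmetrically from below. For $t>0$, a first contact between the reflected upper part $M^+_*(t)$ and $M^-(t)$ away from the slice itself must be an interior tangency: a contact at a point of $\Gamma\subset\partial M^-(t)$ would require some $(q,2t)\in M$ with $q\in\Gamma$, and since $2t>0$ this would place an interior point of $M$ on the cylinder $\Gamma\times\rr$, forbidden by the strict containment just established. An interior tangency yields, via the maximum principle, a CMC coincidence and thus a symmetry of $M$ about $\{z=t\}$; since $\partial M=\Gamma\subset\{z=0\}$, this forces $t=0$. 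Consequently the reflection proceeds down to $t=0$ without contact, which translates into: for any two distinct points $(q,z_1),(q,z_2)\in M$ lying on a common vertical line, $z_1+z_2\le 0$. The mirror argument from below yields $z_1+z_2\ge 0$, hence $z_2=-z_1$. Thus the fibers of the vertical projection $\pi\colon M\to\overline{\Omega}$ contain at most two points, and when two, they form a symmetric pair about $P$.

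Finally I would upgrade this ``symmetric-pair at most'' structure to a genuine single graph using $\partial M=\Gamma$. If symmetric pairs occurred at every $q\in\Omega$, $M$ would be the double of $\overline{\Omega}$ glued along $\Gamma$, a closed hypersurface with empty boundary, contradicting $\partial M=\Gamma\ne\emptyset$. In a mixed scenario, where a nonempty open proper subset $A\subsetneq\Omega$ supports symmetric pairs while its complement supports single points, $M$ would develop a vertical-tangent fold along $\partial A\times\{0\}$; matching the complementary single sheet smoothly to this fold either creates a non-smooth triple point (contradicting embeddedness) or forces an internal boundary of $M$ in $\Omega$ (contradicting $\partial M=\Gamma$). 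Hence $\pi$ is injective, $M$ is a graph over $\overline{\Omega}$, and so a topological disk. The main obstacle lies precisely in this last step: translating the Alexandrov-reflection symmetric-pair structure into the single-graph conclusion requires careful smoothness and topological analysis at any possible fold locus, and is the subtlest part of the argument.
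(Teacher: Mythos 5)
First, a point of comparison: the paper does not prove this statement at all --- it is quoted from the literature, with the proof attributed to \cite{NSST} (for $n=2$) and \cite{BS} (for $n>2$). Your strategy is in fact the same one used in those references: vertical horocylinders $\{b_\xi=c\}\times\rr$, which have constant mean curvature $\tfrac{n-1}{n}$, serve as barriers to trap $M$ inside $\overline{\Omega}\times\rr$ (your Busemann computation is just the analytic form of that barrier argument, and it is correct, including the borderline case $H=\tfrac{n-1}{n}$ handled by the tangency principle), and then Alexandrov reflection with horizontal slices gives graphicality. So the route is right; the issues are in the execution of the second half.

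Two gaps. (i) In the reflection step you only consider two ways the procedure can stop: an interior tangency, or contact along $\Gamma$. You omit the standard third alternative, a tangency at a point of the hinge $M\cap\{z=t\}$ where $M$ has a vertical tangent hyperplane, which must be excluded by the boundary maximum principle. More seriously, for the maximum principle to apply at a first contact you need the reflected sheet and $M$ to meet with \emph{coherently oriented} normals; this is usually guaranteed by reflecting inside the compact region $W$ bounded by $M\cup\overline{\Omega}$, and you never set this up (note that a priori $M$ may cross the slice $P$ over $\Omega$, so even the existence of $W$ needs a word). (ii) The last step --- passing from ``each vertical fiber contains at most two points, forming a symmetric pair'' to ``each fiber contains exactly one point'' --- is the part you yourself flag as subtle, and the fold-locus discussion you give is not a proof. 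There is a clean way to close it that avoids any analysis of fold loci: since $\partial M=\Gamma=\partial\overline{\Omega}$, the mod $2$ chain $M+\overline{\Omega}$ is a cycle in $\hh^n\times\rr\cong\rr^{n+1}$ and hence bounds, so a generic vertical line over a point of $\Omega$ meets $M+\overline{\Omega}$ an even number of times; it meets $\overline{\Omega}$ once, hence it meets $M$ an odd number of times. Odd and at most two means exactly one, and a continuity argument extends this to every fiber. With (i) repaired and (ii) replaced by this parity argument, your proof is complete and coincides with the one in the cited references.
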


The previous Theorem  for $n=2$  is proved  by B. Nelli, R. Sa Earp, W. Santos and E. Toubiana in \cite[Theorem 2.2]{NSST} and  by P. Berard and R.  Sa Earp in \cite[Theorem 3.3]{BS} for $n>2$.

Ros and  Rosenberg's proof relies on a crucial rescaling theorem (see \cite[Theorem 1]{RR}).
In trying to adapt their proof to $\hh^n\times\rr$, however, one is faced with the first obstacle that rescalings are isometries in $\hh^n$. 
This obstacle was overcome by B. Semmler in \cite{S}, where she uses constant mean curvature  horizontal half-cylinders as barriers to give a different proof of \cite[Theorem 2]{RR}.   In order to be able to use Semmler's type proof we need   {\em horizontal cylinders}, {\em i.e.} constant mean curvature hypersurfaces invariant by hyperbolic translations in $\hh^n\times\rr.$ Such cylinders for $n=2$ are described  by M. Manzano and I. Onnis  \cite{Man, On} and  by P. Berard and R. Sa Earp \cite{BS} for $n>2.$ 
Then, one is faced with the  second obstacle that, when $H\longrightarrow \frac{n-1}{n},$ compact  constant mean curvature, invariant by rotations hypersurfaces in  $\hh^n\times\rr,$  converge to a  complete non compact graph different from  a slice (see Remark \ref{no-flat}). In order to overcome this second obstacle we need two ingredients:
\begin{itemize}
\item a precise estimate of  how much we can go beyond the boundary of $M$, when doing  Alexandrov reflection: Lima\c con construction, see Section \ref{sez-lima};
\item the use of immersed constant mean curvature hypersurfaces in $\hh^n\times\rr$ invariant by rotation and horizontal cylinders, as barriers. 
\end{itemize} 

The paper is organized as follows.
In Section \ref{sez-lima}, we describe the construction of the hyperbolic  Lima\c con and study  its geometry. 
In Section \ref{sez-barriere}, we describe all the comparison hypersurfaces that we need  in the following. %By doing that, we find a new family  of constant mean curvature hypersurfaces invariant by a hyperbolic translation of $\hh^n.$ 
In Section \ref{proof}, we prove our main Theorem.  To simplify the reading, in the Appendix, we summarize the  principal notations introduced in the article. Throughout the article (except in  a part of Section \ref{hor-cyl}) we will use the ball model for the hyperbolic space. Moreover we will use extensively Alexandrov reflection method which is explained in details in \cite[Chapter 7]{H},\cite{N}. 

\section{The Lima\c con in the hyperbolic space}\label{sez-lima}

In this section we describe a family of hypersurfaces of the hyperbolic space $\hh^n,$ analogous to the classical  curve in the Euclidean plane known as {\em Lima\c con  de Pascal}. Such hypersurfaces will be convenient  to estimate radii of disks appearing in the proof of the main theorem.

\begin{Definition} \label{def lima} We call \emph{hyperbolic Lima\c con} the hypersurface $\hl$ of $\hh^n$ defined in the following way. Fix two points $A\neq C\in\hh^n$ and a positive constant $c$, let $\cc$ be the geodesic sphere of center $C$ and radius $c$. For any $P\in\cc$ let $A_P$ be the reflection of $A$ across the totally geodesic hyperplane of $\hh^n$ tangent to $\cc$ at $P$. Then $\hl=\{A_P\in\hh^2\ |\ P\in\cc\}$. See Figure 1 below.
\end{Definition}

\begin{figure}[!h]%\label{fig1}
\centering
\includegraphics[width=0.7\textwidth]{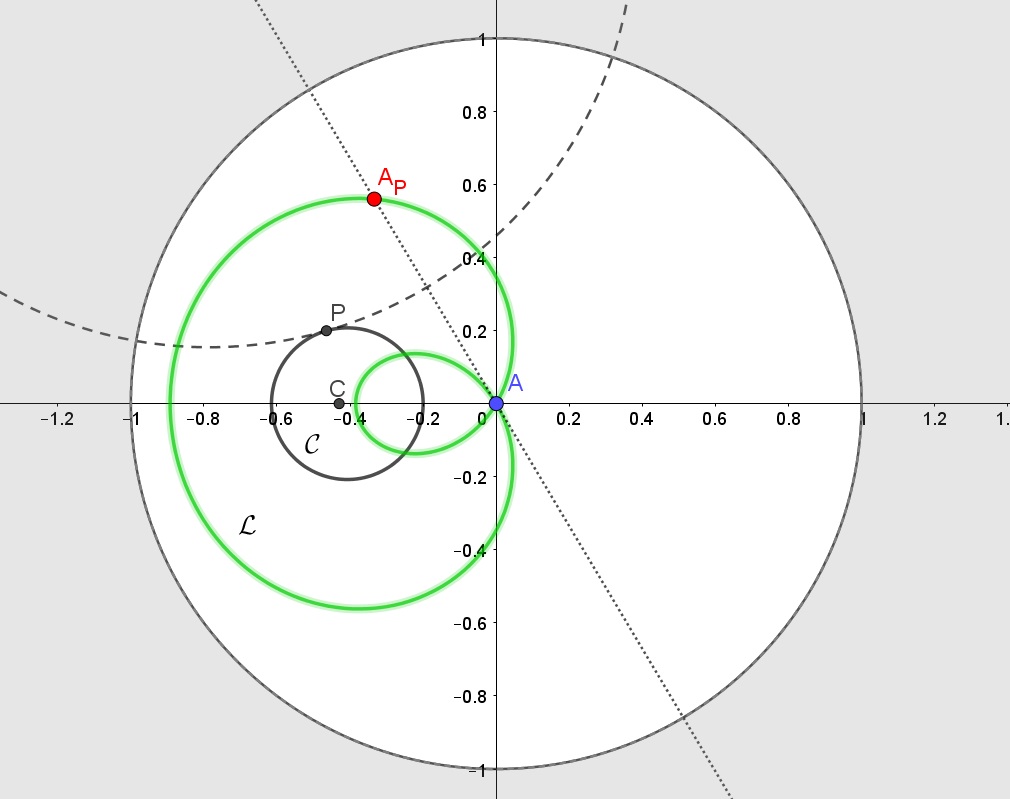}
\caption{Hyperbolic Lima\c con for $n=2$, $a>c$.}
\end{figure}

In what follows we want to describe the main proprieties of $\hl$. It is evident by definition that $\hl$ is invariant under rotations around the geodesic passing through $A$ and $C$, therefore it is enough to study only the case $n=2$. 

\begin{Remark}
Since $\hh^2$ is  homogeneous, up to isometries of the ambient space, $\hl$ depends only on two positive parameters: $a:=d(A,C)$ and $c$.
\end{Remark}

In the following $a$ and $c$ will be called the {\em parameters} of the Lima\c con and $A$ the {\em base point} of it.

\begin{Remark}
The name of this curve is justified by the fact that the same construction in the Euclidean plane produces the classical Lima\c con parametrized, up to isometry, by
$$
\hl(\vartheta)=\left(-a\cos(2\vartheta)+2c\cos(\vartheta),-a\sin(2\vartheta)+2c\sin(\vartheta)\right).
$$
Moreover it is well known that the Euclidean Lima\c con can be defined in many equivalent ways. For example it is the pedal curve of a circumference. The pedal curve of a circumference in the hyperbolic plane is the subject of \cite{SJ}.
\end{Remark}

The following result says that  the shape of  the hyperbolic Lima\c con is  qualitatively analogous to  the shape of the Euclidean one.

\begin{Lemma}\label{lemma lima}

Let $\hl$ be the hyperbolic Lima\c con with parameters $a,$ $c$ and  base point $A.$ Let $\mathcal C$ be the sphere defining $\hl$ and $C\not=A$ its center.
Then $\hl$ is a closed continuous curve, which continuously depends on $a$ and  $c.$  It is symmetric with respect to the geodesic through $A$ and $C$. If $a<c$ it is a simple curve. If $a=c$ it has a cusp in $A$. If $a>c$ it has two loops, one inside the other, and it crosses itself only in $A.$

\end{Lemma}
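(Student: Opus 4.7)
My plan is to exploit the rotational symmetry of the construction to reduce to $n=2$, and then to analyze the map $\Phi:\cc\to\hh^2$, $\Phi(P)=A_P$, via the classical trichotomy for compositions of two reflections. Any rotation of $\hh^n$ fixing the geodesic through $A,C$ preserves $\cc$ and permutes its tangent hyperplanes equivariantly, hence fixes $\hl$ setwise; so $\hl$ is a hypersurface of revolution around the $AC$-axis and it suffices to work in $\hh^2$. Parametrizing $\cc$ by the angle $\vartheta=\angle ACP\in[0,2\pi)$ with $\vartheta=0$ pointing towards $A$, the map $\vartheta\mapsto \Phi(P(\vartheta))$ is smooth and $2\pi$-periodic, and jointly smooth in the parameters $a,c$, giving closedness, continuity, and continuous dependence on $(a,c)$. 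Symmetry across the geodesic through $A,C$ is immediate: the reflection $\sigma$ across this axis fixes $A$ and $C$, preserves $\cc$, and satisfies $\sigma(\tau_P)=\tau_{\sigma(P)}$, so $\sigma\circ\Phi=\Phi\circ\sigma$.

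The key point is the characterization of self-intersections. For distinct $P_1,P_2\in\cc$, $\Phi(P_1)=\Phi(P_2)$ is equivalent to the isometry $\rho_{\tau_{P_2}}\circ\rho_{\tau_{P_1}}$ fixing $A$, where $\rho_\tau$ denotes reflection across $\tau$. Since distinct tangent lines to $\cc$ touch $\cc$ at distinct points, $\tau_{P_1}\neq\tau_{P_2}$, and a product of two reflections across distinct geodesics in $\hh^2$ is a hyperbolic translation, a parabolic isometry, or a rotation; only the rotation case has interior fixed points, with fixed set exactly $\tau_{P_1}\cap\tau_{P_2}$. Hence $\Phi(P_1)=\Phi(P_2)$ if and only if $A\in\tau_{P_1}\cap\tau_{P_2}$. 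In turn, $\tau_P$ passes through $A$ precisely when $\triangle ACP$ has a right angle at $P$, and the hyperbolic Pythagorean identity $\cosh a=\cosh c\cdot\cosh(AP)$ admits a solution $AP\geq 0$ if and only if $a\geq c$, with the unique (double) solution $P=A$ at $a=c$ and two symmetric solutions $P^\pm$ when $a>c$.

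This trichotomy gives the three cases of the lemma. For $a<c$, $\Phi$ is injective, so $\hl$ is simple. For $a>c$, the only self-intersection is $A=\Phi(P^+)=\Phi(P^-)$, and the two arcs of $\cc$ cut out by $P^\pm$ map to Jordan curves (the two ``loops'') meeting only at $A$. Their nesting will be obtained by continuous deformation in $a$: as $a\to c^+$, the points $P^\pm$ coalesce at the point of $\cc$ nearest to $A$ and the short-arc loop shrinks to $A$, so it lies in the bounded region determined by the outer loop for $a$ close to $c^+$; since the loops can meet only at $A$, this nesting persists for all $a>c$. The case $a=c$ is handled by Taylor expansion near $\vartheta=0$: in the isoceles triangle $\triangle ACP$ with $CA=CP=c$ and $\angle ACP=\vartheta$, the hyperbolic laws of cosines and sines give $\sinh(AP)=\sinh(c)|\vartheta|+O(\vartheta^3)$ and $\cos(\angle APC)=\tfrac12\cosh(c)|\vartheta|+O(\vartheta^3)$; since $\tau_P\perp CP$ at $P$, one then obtains
\[
\sinh d(A,\tau_P)=\sinh(AP)\cos(\angle APC)=\tfrac14\sinh(2c)\,\vartheta^2+O(\vartheta^4),
\]
so $d(A,\Phi(P))=2d(A,\tau_P)$ vanishes to second order in $\vartheta$, yielding the cusp at $A$. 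The main obstacles I foresee are the nesting of the two loops when $a>c$ and the order-of-vanishing bookkeeping in the cusp expansion; the rest follows formally from the self-intersection characterization and the count of tangents from $A$ to $\cc$.
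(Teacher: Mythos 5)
Your overall strategy coincides with the paper's: reduce to $n=2$ by rotational invariance, show that the only possible multiple point of $\hl$ is $A$ itself, and count the tangent geodesics from $A$ to $\cc$ according to the position of $A$ relative to the disk bounded by $\cc$. Two of your sub-arguments are genuinely different in execution. For injectivity you classify the product $\rho_{\tau_{P_2}}\circ\rho_{\tau_{P_1}}$ of two reflections; the paper instead observes that the reflection carrying $A$ to $B\neq A$ is the reflection in the perpendicular bisector of $\overline{AB}$, so the tangency point is uniquely recoverable from $B$. These are equivalent, yours being a bit more conceptual. For the cusp, the paper only says that $a=c$ ``can be thought as the limit case'' of $a>c$; your expansion $d(A,\Phi(P(\vartheta)))=\tfrac{1}{2}\sinh(2c)\,\vartheta^2+O(\vartheta^4)$ is a genuine quantitative improvement (the trigonometry checks out), though to fully earn the word \emph{cusp} you should add the one-line remark that the direction from $A$ to $A_{P(\vartheta)}$ tends to the outward radial direction at $A$ as $\vartheta\to 0^{\pm}$, so the two branches leave $A$ along the same limiting tangent ray; second-order vanishing of the distance alone does not exclude a regular arc traversed back and forth.

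The one step I would not accept as written is the nesting of the two loops for $a>c$. You argue that as $a\to c^+$ the short-arc loop shrinks to $A$ and ``so it lies in the bounded region determined by the outer loop''; but $A$ lies \emph{on} the outer loop, so a tiny loop attached to the outer loop at $A$ could a priori sit on either side of it --- the base case of your deformation is not established. The persistence claim also needs justification: one should fix a point of the punctured small loop varying continuously in $a$ and never meeting the big loop (for instance $A_P$ with $P$ the point of $\cc$ nearest to $A$) and note that the winding number of the big loop about it is a locally constant integer. The paper sidesteps the deformation entirely: it shows that the axis $\gamma$ through $A$ and $C$ meets $\hl$ exactly in $\{A,A_P,A_Q\}$, that these occur in the order $A,A_P,A_Q$ along $\gamma$ but in the order $A_P,A,A_Q,A$ along the traversal of $\hl$, and reads off the nesting, identifying the smaller loop as the one through $A_P$ since $d(A,A_P)=2d(A,P)<2d(A,Q)=d(A,A_Q)$. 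Either supply the winding-number bookkeeping for your deformation or replace this step by such an ordering argument.
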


\proof
From the very definition it is evident that $\hl$ is an immersion of a $\mathbb S^1$ in $\hh^2$, that  continuously depends on the parameters $a$ and $c$ and  is symmetric with respect to the complete geodesic joining $A$ and $C$. 

Note that, for any choice of $a$ and $c$, $\hl$ has no multiple points except, possibly, $A$. In fact take any point $B\in\hl$, $B\neq A$, let $\overline{AB}$ the geodesic segment joining $A$ and $B$ and let $M$ be its middle point. Reversing the construction of Definition \ref{def lima}, the geodesic passing through $M$ and orthogonal to $\overline{AB}$ has to be tangent to $\cc$. Let $X$ be the tangency point, then $B=A_X$, and $X$ is uniquely determined by this procedure.  Moreover, by Definition \ref{def lima}, we have that $A\in\hl$ if and only if there is a geodesic tangent to $\cc$ containing $A$.
After these general facts we need to distinguish the three cases.

If $a<c$ the point $A$ lies in the disk bounded by $\cc$.  In this case $A\notin\hl$, otherwise there would be a geodesic passing through $A$ and tangent to $\cc$. Therefore $\hl$ is a simple curve.

When $a>c$, $A$ is clearly outside the disk bounded by $\cc$, then there are two distinct geodesic passing though $A$ and tangent to $\cc$. Hence $A$ is a double point for $\hl$ and, as showed above, it is its only multiple point. Now let $\gamma$ be the complete geodesic joining $A$ and $C$: $\gamma$ intersects $\cc$ in exactly two points: denote by $P$ the closest to $A$, by $Q$ the second one and let $A_{P}$ and $A_{Q}$ be the corresponding points on $\hl$. Since $\gamma$ and $\cc$ are orthogonal in $P$ and $Q$, we have that $\gamma\cap\hl=\{A,A_{P},A_{Q}\}$. Moreover, walking along $\gamma$ from $A$ in the direction of $C$, we will meet $A$, $A_P$ and $A_Q$ exactly in that order. Suppose now to walk along $\cc$ clockwise starting from $P$, then in $\hl$ we will meet $A_P$, $A$, $A_Q$ and $A$ again exactly in that order. Since $\hl$ is continuous and by the order of the points $A$, $A_P$ and $A_Q$ on $\gamma$, it follows that $\hl$ has two loops branching in $A$, one inside the other. By construction $d(A,A_P)=2d(A,P)$ and $d(A,A_Q)=2d(A,Q)$, therefore the smaller loop is that one passing through $A_P$. 

The case $a=c$ can be thought as the limit case when $a>c$ and $a$ converges to $c$: in this case $P$ converges to $A$, then the smaller loops shrinks to $A$ producing a cusp. 

%Finally, fix $a$, and let $c$ goes to $0$. Then $\cc$ shrinks to $C$ and the curve described in Definition \ref{def lima} reduces to the set of reflections of $A$ across geodesics passing through $C$. This set is clearly a geodesic circumference with center $C$ and radius $a$.  
\cvd

For applications we would like to estimate the size of $\hl$ with particular attention to the case of a small $c$.

\begin{Lemma}\label{stima loop}
Let $\hl$ be the hyperbolic lima\c con with parameter $a>c$ and base point $A$. Let $\cc$ be the sphere defining $\hl$, let $C$ be its center and let $P$ be the point of $\cc$ closest to $A$. Then we have:

\begin{enumerate}
\item The smaller (resp. the larger) loop of $\hl$ is contained in (resp. contains) the disk with center $P$ and radius $a-c$. 
\item If $a>2c$, then the smaller loop of $\hl$ bounds the disk with center $C$ and radius $a-2c$.
\item The whole $\hl$ is contained in the disk with center $C$ and radius $a+2c$.
\end{enumerate}
Moreover, for any fixed $a$, $C$ and $A$, when $c$ converges to zero, $\hl$ converges to twice the sphere with center $C$ and radius $a$.
\end{Lemma}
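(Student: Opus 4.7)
The plan is to exploit that each $A_X$ is defined by the reflection $\sigma_X$ across the tangent hyperplane $\ell_X$ to $\cc$ at $X$, and to compare distances through this reflection. Set $C_X := \sigma_X(C)$. Because $\ell_X$ is perpendicular to the geodesic $CX$ at $X$ and $d(C,X) = c$, the point $X$ is the midpoint of $CC_X$, so $d(C,C_X) = 2c$. Since $\sigma_X$ is an isometry we also have $d(A_X, C_X) = d(A,C) = a$. These two identities drive statements (2) and (3); statement (1) additionally uses the reformulation $d(P, A_X) = d(\sigma_X(P), A)$.

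For (3) and the metric part of (2), I apply the triangle inequality to the points $A_X, C_X, C$:
\[
|a - 2c| \ \le\ d(A_X, C) \ \le\ a + 2c \qquad \text{for every } X \in \cc .
\]
The upper bound proves (3). When $a > 2c$, the lower bound shows that the open geodesic disk $D_C$ of center $C$ and radius $a - 2c$ is disjoint from $\hl$. To conclude (2), I observe that $C$ lies in the Jordan interior of the smaller loop: along the geodesic through $A$ and $C$ one checks directly that, for $a > 2c$, the point $A_P$ lies past $C$, so $C$ belongs to the open geodesic segment $\overline{AA_P}$; since the smaller loop intersects this geodesic only at $A, A_P, A_Q$ (as established in the proof of Lemma \ref{lemma lima}), that open segment lies entirely in the smaller loop's Jordan interior, and so does the connected set $D_C$ which contains $C$ and avoids $\hl$.

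For (1) I use $d(P, A_X) = d(P, \sigma_X(A)) = d(\sigma_X(P), A)$ and compare with $d(P,A) = a - c$. The point $P$ lies on $\cc$, hence in the closed half-space of $\ell_X$ containing $C$ (the \emph{inner} side). The point $A$ lies on the inner side precisely when $X$ belongs to the far arc of $\cc$ (the arc between the two tangency points $X_1, X_2$ from $A$ containing $Q$), and on the outer side precisely when $X$ lies on the near arc (containing $P$). A geodesic-crossing argument shows that reflecting $Y$ across a geodesic $\ell$ decreases the distance to $Z$ when $Y$ and $Z$ are on opposite sides of $\ell$, and increases it when they are on the same side. Applying this with $Y = P$, $Z = A$ and $\ell = \ell_X$ yields $d(\sigma_X(P), A) \le a - c$ for $X$ on the near arc, giving the smaller-loop inclusion in (1), and $d(\sigma_X(P), A) \ge a - c$ for $X$ on the far arc; combined with the same Jordan-interior argument as in (2), applied with $P$ in place of $C$ (the midpoint of $\overline{AA_P}$ is $P$ itself), this yields the larger-loop inclusion.

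Finally, for the limit, parametrise $\cc$ by the unit vector $v \in T_C \hh^2$ so that $X_v(c) = \exp_C(cv)$. As $c \to 0$ with $A$ and $C$ fixed, $X_v(c)$ converges to $C$ and the tangent geodesic at $X_v(c)$ converges to the complete geodesic $\ell_v$ through $C$ perpendicular to $v$. Since $\ell_v$ passes through $C$, reflecting $A$ across $\ell_v$ produces a point at distance exactly $a$ from $C$; as $v$ varies on the unit circle this reflected point sweeps the entire geodesic sphere of radius $a$ about $C$, and does so twice, since $v$ and $-v$ determine the same perpendicular geodesic $\ell_v$. The main technical obstacle I anticipate is the side-of-tangent dichotomy driving (1): one must track, as $X$ traverses $\cc$, which half-space of $\ell_X$ contains $A$ and orient the reflection inequality accordingly. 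Everything else reduces to clean applications of the triangle inequality, the isometry invariance of $\sigma_X$, and the topological picture already established in Lemma \ref{lemma lima}.
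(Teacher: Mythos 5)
Your proof is correct, and it takes a genuinely different route from the paper's. For parts (2) and (3) the paper argues by contradiction on each candidate bounding sphere: given $X$ on the sphere, it drops the tangent geodesic to $\cc$ perpendicular to the geodesic through $X$ and $A$, and uses explicit hyperbolic trigonometry (the right-triangle relations from Beardon) to show the foot of that perpendicular cannot be the midpoint of $\overline{XA}$, so $X\notin\hl$. Your device of the reflected centre $C_X=\sigma_X(C)$, with $d(C,C_X)=2c$ and $d(A_X,C_X)=a$, replaces all of that with one application of the triangle inequality and bounds $d(A_X,C)$ for \emph{every} point of $\hl$ at once; it is shorter, dimension-free, and avoids trigonometric identities entirely. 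For part (1) the paper instead shows $\cc_1\cap\hl=\{A,A_P\}$ by a uniqueness-of-tangency argument, whereas you use the monotonicity of distance under reflection across a separating versus non-separating geodesic together with the near-arc/far-arc dichotomy for the position of $A$ relative to $\ell_X$; both arguments are sound, and both ultimately rely on the same soft topological step (locating $C$, resp.\ $P$, in the Jordan interior of the relevant loop via the segment of the axis $\gamma$, which the paper handles equally informally through the antipodal point of $A_P$). Two small points: the smaller loop meets $\gamma$ only at $A$ and $A_P$ (not at $A_Q$, which lies on the larger loop), and the inclusion of the open segment $(A,A_P)$ in the Jordan interior deserves a word about the symmetry of the loop across $\gamma$; neither affects the argument. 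Finally, your treatment of the limit $c\to 0$ is actually more complete than the paper's: by tracking that the reflection in $T_C\hh^2$ rotates at twice the speed of $v$, you justify the multiplicity-two statement (``twice the sphere''), which the paper only asserts by letting the three bounding spheres collapse to the sphere of radius $a$.
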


\proof Since $a>c$, by  Lemma \ref{lemma lima}, $\hl$ has two loops. 

\begin{enumerate}
\item Let$A_P$ be as in the proof of Lemma \ref{lemma lima} and let ${\cc_1}$ be the geodesic sphere of center $P$ and radius $a-c$.  We claim that ${\cc_1}\cap\hl=\{A,\ A_P\}$. In fact $A,A_P\in{\cc_1}\cap\hl$ trivially. For any point $B\in{\cc_1}$ with $B\neq A,A_P$, let $\overline{AB}$ be the geodesic segment joining $A$ and $B$ and let $M$ be its middle point. See Figure 2 for a picture of the construction. For the choice of $B$ we have that $M\neq P$. Let ${\gamma}$ be the geodesic through $M$ and $P$. The geodesic triangles $AMP$ and $BMP$ are congruent, hence ${\gamma}$ is orthogonal to $\overline{AB}$ in $M$. Now suppose that there exists a $B\in\hl\cap{\cc_1}$ with $B\neq A,A_P$, then, by the construction of Definition \ref{def lima}, ${\gamma}$ should be tangent to $\cc$, but we know that $P\in\cc\cap{\gamma}$, hence we would get $B=A_P$, having a contradiction. From this claim it follows that the smaller (resp. the larger) loop of $\hl$ is inside (resp. outside) ${\cc_1}$.

%\begin{figure}[!h]
%\centering
%\includegraphics[width=0.65\textwidth]{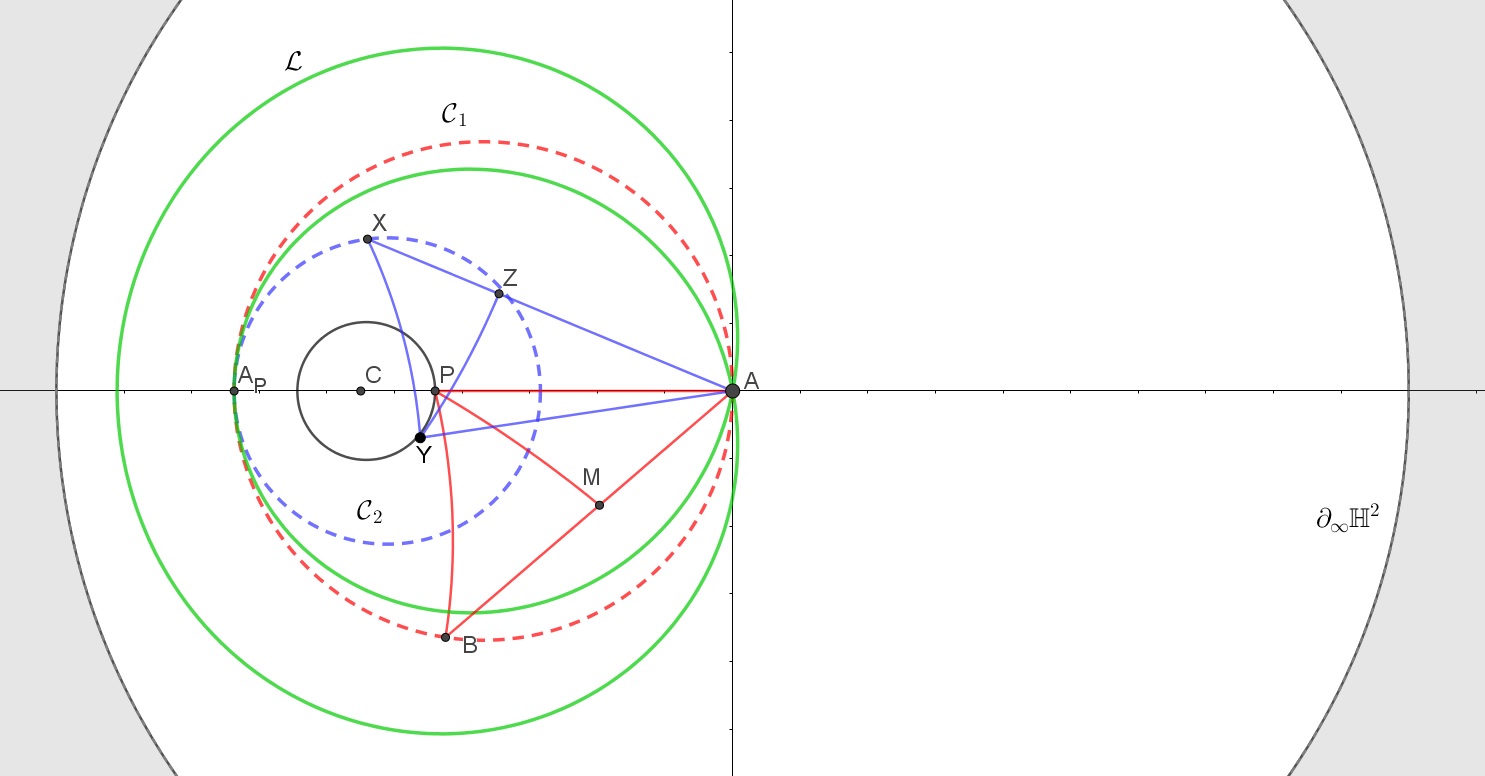}
%\caption{Estimate of the diameter of the smaller loop. Green: hyperbolic Lima\c con with $a>2c$. Red: the construction in the proof of Lemma \ref{stima loop}-1. Blue: the construction in the proof of Lemma \ref{stima loop}-2.}
%\end{figure}

\item Now suppose that $a>2c$. Let $\cc_2$ be the geodesic sphere with center $C$ and radius $a-2c$. We want to prove that the smaller loop of $\hl$ bounds $\cc_2$. By construction we have $A_P\in\cc_2\cap\hl$. We claim that $\cc_2\cap\hl=\{A_P\}$. 
In fact, fix $X\in\cc_2$, $X\neq A_P$, let $\gamma$ be the unique geodesic tangent to $\cc$ and orthogonal to the geodesic $\alpha$ containing  the segment $\overline{XA}$, let $Y=\gamma\cap\cc$ be the tangency point and let $Z=\gamma\cap\alpha$. See Figure 2 for a picture of the construction.  In order to prove that $X\notin\hl$, by Definition \ref{def lima}, we need to prove that $Z$ is not the middle point of $\overline{XA}$.  If  $Z\not\in\overline{XA},$ we are done, hence  we  assume  $Z\in\overline{XA}.$ Since $X\neq A_P$ we have that the following triangle inequality is strict:
$$
d(X,Y)<d(X,C)+d(C,Y)=a-c.
$$
On the other hand, since $Y\in\cc$ and by definition of $P$, we have 
$$
d(A,Y)\geq d(A,P)=a-c.
$$
Therefore 
\begin{equation}\label{eq02}
d(X,Y)<d(A,Y).
\end{equation}
Now let us consider the right-angled hyperbolic triangles $XYZ$ and $AYZ$. Let $\beta$ (resp. $\beta'$) be the angle of $XYZ$ (resp. $AYZ$) with vertex $X$ (resp. $A$). By \eqref{eq02} and some hyperbolic trigonometry - see for instance Theorem 7.11.2 of \cite{Be} - we get
\begin{eqnarray*}
\tanh^2d(X,Z) & =& \tanh^2d(X,Y)\cos^2\beta\\
 & =& \tanh^2d(X,Y)\left(1-\frac{\sinh^2d(A,Y)}{\sinh^2d(X,Y)}\sin^2\beta'\right)\\
&<& \tanh^2d(A,Y)\cos^2\beta'\ =\ \tanh^2d(A,Z).
\end{eqnarray*}
It follows that 
$$
d(X,Z)<d(Z,A),
$$
hence $Z$ cannot be the middle point of $\overline{XA}$ and therefore $X\notin\hl$. The result follows noticing that $\cc_2$ is a continuous curve and that the antipodal point of $A_P$ in $\cc_2$ is in the compact domain bounded by the smaller loop of $\hl$, therefore the closed disk bounded by $\cc_2$ is inside this domain as well.

\begin{figure}[!h]%\label{fig2}
\centering
\includegraphics[width=0.6\textwidth]{}
\caption{Estimate of the diameter of the smaller loop. Green: hyperbolic Lima\c con with $a>2c$. Red: the construction in the proof of Lemma \ref{stima loop}-1. Blue: the construction in the proof of Lemma \ref{stima loop}-2.}
\end{figure}

\item The proof is similar to that of the previous case but, this time, the inequalities are reversed. Let $\mathcal C_3$ be the geodesic sphere with center $C$ and radius $a+2c$, let $X$ a point of $\mathcal C_3$ and define $Y$ and $Z$ analogously to the previous case. We have that $d(X,Y)>a+c>d(A,Y)$. Therefore either $Z\notin\overline{XA}$ or $d(X,Z)>d(Z,A)$. In any case $Z$ is not the middle point of $\overline{XA}$, hence $X\notin\hl$. 
\end{enumerate}
Finally, when $c$ converges to $0$, we have that $P$ converges to $C$, hence for any $i=1,2,3$ we get that $\mathcal C_i$ converges to the geodesic sphere with center $C$ and radius $a$. \cvd

\section{Constant mean curvature comparison hypersurfaces}\label{sez-barriere}

There are many examples of constant mean curvature hypersurfaces  in $\hh^n\times\rr$ invariant by some ambient isometry. 
In this section we describe some of them, that  will be mainly  used as barrier in the proof of our main Theorem. 

\subsection{Rotationally invariant hypersurfaces}\label{rotaz}
P. B\'erard and R. Sa Earp \cite{BS} classify the rotationally invariant hypersurfaces of $\hh^n\times\rr$ with constant mean curvature. Following their notations, for any $m\in\mathbb N$, for any $H>0$ and for suitable choice of the parameter $d,$ we define
\begin{eqnarray}
\nonumber I_m(t) &=& \int_0^t\sinh^m(\tau) d\tau;\\
\label{rot alta} \lambda_{H,d}(\rho) & =& \int_{\rho_0}^{\rho}\frac{nHI_{n-1}(t)+d}{\sqrt{\sinh^{2n-2}(t)-(nHI_{n-1}(t)+d)^2}}dt,
\end{eqnarray}
where $\rho_0\geq 0$ is the infimum of the interval where the integrand function exists. The rotation around the axis $\{0\}\times\rr$ of the graph of the function $\lambda_{H,d}$ produces, up to isometry of the ambient space, all the rotationally invariant hypersurfaces with constant mean curvature $H$. The case of $n=2$ has been studied in detail in \cite{On, ST, NSST}. 

In this section we describe only the three types of hypersurfaces in this class, that are most relevant for our purposes. We refer to the bibliography cited above for an exhaustive discussion about the topic.

\textbf{Case 1: the spheres $\mathcal S_H$.}\\
 When $H>\frac{n-1}{n}$ and $d=0$, $\lambda_{H,0}$ is defined for $\rho\in[0,R_{\mathcal S}]$ where $R_{\mathcal S}$ is the solution of the equation
\begin{equation}\label{rs}
\sinh^{n-1}(\rho)-nHI_{n-1}(\rho)=0.
\end{equation}

 The graph of the function $\lambda_{H,0}$ is tangent to the plane $t=0$ when $\rho=0$ and it has a vertical tangent at $\rho=R_{\mathcal S}$. Let $h_{\mathcal S}=\lambda_{H,0}(R_{\mathcal S})$ be the maximal height of such curve. Let $\mathcal S_H$ be the hypersurface generated by rotating the graph of $\lambda_{H,0}$ and the graph of the function $-\lambda_{H,0}+2h_{\mathcal S}$ around the $t$-axis. According to  \cite[Theorem 2.3]{BS}, $\mathcal S_H$ is a compact embedded smooth hypersurface with the topology of the sphere. When $n=2$ - hence $H>\frac 12$ - we have the explicit expression
\begin{align*}
%\label{eq:Lambda}
&\lambda_{H,0}(\rho)=\frac{4H}{\sqrt{4H^2-1}}\arcsin\frac{1}{2H}-\frac{4H}{\sqrt{4H^2-1}}\arctan\sqrt{\frac{1-4H^2\tanh^2\frac{\rho}{2}}{4H^2-1}}, \\
 &\rho\in \left[0, \cosh^{-1}\left(\frac{4H^2+1}{4H^2-1}\right)\right] \notag
 \end{align*}

\textbf{Case 2: the complete graphs $\mathcal S_{\frac{n-1}{n}}$.} \\
When $H=\frac{n-1}{n}$ and $d=0$, the curve $\lambda_{H,0}$ is defined for any $\rho\geq 0$. Let $\mathcal S_{\frac{n-1}{n}}$ be the hypersurface generated by rotating $\lambda_{\frac{n-1}{n},0}$ around the $t$-axis. According to  \cite[Theorem 2.1]{BS}, $\mathcal S_{\frac{n-1}{n}}$ is a simply connected entire vertical graph, contained in a half-space and tangent to the hyperplane $t=0$ at $\rho=0$. Moreover when $n=2$  \eqref{rot alta} can be solved explicitly and one has
\begin{equation*}
\lambda_{\frac{1}{2},0}(\rho)=2\left(\cosh\frac{\rho}{2}-1\right). 
\end{equation*}

\textbf{Case 3: immersed annuli $\mathcal A_{d}$.}\\
 When $H=\frac{n-1}{n}$ and $d<0$, the curve $\lambda_{\frac{n-1}{n},d}$ is defined for any $\rho\geq r_{0,d}$ where $r_{0,d}$ is the unique solution of the equation 
\begin{equation}\label{r0d}
\sinh^{n-1}(\rho)+(n-1)I_{n-1}(\rho)+d=0.
\end{equation}
Moreover $\lambda_{\frac{n-1}{n},d}$ has a vertical tangent at $r_{0,d}$, it is negative for $\rho$ close to $r_{0,d}$ and $\lim_{\rho\rightarrow+\infty}\lambda_{\frac{n-1}{n},d}(\rho)=+\infty$. From \eqref{rot alta} it easy to see that $\lambda_{\frac{n-1}{n},d}$ has only one critical point at $\rho=r_{1,d}$ where $r_{1,d}$ is the unique solution of the equation
\begin{equation}\label{r1d}
(n-1)I_{n-1}(\rho)+d=0.
\end{equation}
Let $\mathcal A_{d}$ be the hypersurface generated by rotating the graphs of $\lambda_{\frac{n-1}{n},d}$ and $-\lambda_{\frac{n-1}{n},d}$ around the $t$-axis. According to \cite[Theorem 2.1]{BS}, $\mathcal A_{d}$ is a complete hypersuface, symmetric with respect to the hyperplane $t=0$ with self intersection along a sphere of this hyperplane. Moreover $\mathcal A_{d}\cap\rr^{\pm}$ are vertical graphs outside a disk of $\hh^n\times\{0\}$ with center in the origin and radius $r_{0,d}$.

\begin{Remark}
\label{no-flat}{\em
We sum up some important relations between the hypersurfaces described above.
\begin{itemize}
\item When $d\longrightarrow 0$, then hypersurfaces  $\mathcal A_{d}$ tend  to  the union of $\mathcal S_{\frac{n-1}{n}}$ with its reflection with respect to the slice $\{t=0\}.$ 
\item When $H\longrightarrow\frac{n-1}{n},$ the spheres ${\mathcal S}_H$ tend to the complete graph ${\mathcal S}_{\frac{n-1}{n}}.$ 
Notice that, differently  from the Euclidean case, here, the spheres do not converge to a horizontal slice.
\end{itemize}}
\end{Remark}

For future use we need to estimate various quantities associated to the hypersurfaces $\mathcal S_{H}$, $\mathcal S_{\frac{n-1}{n}}$ and $\mathcal A_{d}$. In particular we would like to compare $r_{0,d}$ and $r_{1,d}$ defined above with $\rho_H^*$ defined in the following way: fix $d$ and $H\geq\frac{n-1}{n}$, $\rho_H^*$ is the radius of $\mathcal S_{H}$ at height $h^*$, where $h^*$ is defined in \eqref{h*} below and it is a suitable approximation of the height of the portion of $\mathcal A_d$ between $r_{0,d}$ and $r_{1,d}$. In particular $\rho_H^*$ satisfies the equation 
\begin{equation}\label{rH*}
\lambda_{H,0}(\rho_H^*)=h^*.
\end{equation}

\begin{Lemma}\label{raggi D}
With the notations introduced so far, the following hold for any $H\geq\frac{n-1}{n}$:
\begin{equation*}
\lim_{d\rightarrow 0}r_{0,d}=0,\quad\lim_{d\rightarrow 0}r_{1,d}=0,\quad\lim_{d\rightarrow 0}\rho_H^*=0;
\end{equation*}
moreover 
\begin{equation*}
\lim_{d\rightarrow 0}\frac{r_{0,d}}{r_{1,d}}=0,\quad\lim_{d\rightarrow 0}\frac{r_{1,d}}{\rho_H^*}=0.
\end{equation*}
\end{Lemma}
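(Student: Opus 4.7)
My approach rests on Taylor expansions at the origin combined with monotonicity of the functions appearing in the defining equations \eqref{r0d}, \eqref{r1d} and \eqref{rH*}. The basic expansions I use are
$$
\sinh^{n-1}(\rho) = \rho^{n-1}+O(\rho^{n+1}), \qquad I_{n-1}(\rho) = \frac{\rho^n}{n}+O(\rho^{n+2}),
$$
as $\rho\to 0^+$.

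First I show the three quantities tend to zero. The map $\rho\mapsto (n-1)I_{n-1}(\rho)$ is continuous, strictly increasing on $[0,\infty)$, and vanishes at $\rho=0$; since \eqref{r1d} reads $(n-1)I_{n-1}(r_{1,d})=-d\to 0^+$, I conclude $r_{1,d}\to 0$. An identical argument applied to $\rho\mapsto\sinh^{n-1}(\rho)+(n-1)I_{n-1}(\rho)$ gives $r_{0,d}\to 0$ from \eqref{r0d}. For $\rho_H^*$, I use the defining property of $h^*$: it approximates the height of the portion of $\mathcal A_d$ between the radii $r_{0,d}$ and $r_{1,d}$, which both collapse to the origin, so $h^*\to 0$; continuity and strict monotonicity of $\lambda_{H,0}$ near $\rho=0$, together with \eqref{rH*}, then force $\rho_H^*\to 0$.

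Plugging the Taylor expansions into the defining equations gives the precise rates:
$$
\frac{n-1}{n}\,r_{1,d}^n\bigl(1+O(r_{1,d}^2)\bigr)=-d,
\qquad
r_{0,d}^{n-1}\bigl(1+o(1)\bigr)=-d,
$$
where in \eqref{r0d} the term $\sinh^{n-1}(\rho)\sim \rho^{n-1}$ dominates $(n-1)I_{n-1}(\rho)\sim\frac{n-1}{n}\rho^n$ as $\rho\to 0^+$. Hence
$$
r_{1,d}\sim\left(\frac{-nd}{n-1}\right)^{1/n},\qquad r_{0,d}\sim(-d)^{1/(n-1)},
$$
and since $\tfrac{1}{n-1}-\tfrac{1}{n}=\tfrac{1}{n(n-1)}>0$, the ratio satisfies $r_{0,d}/r_{1,d}\sim C\,(-d)^{1/(n(n-1))}\longrightarrow 0$.

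For the last ratio, differentiating \eqref{rot alta} and using the same expansions yields $\lambda'_{H,0}(\rho)\sim H\rho$ near the origin, so $\lambda_{H,0}(\rho)\sim \tfrac{H}{2}\rho^2$, and inverting via \eqref{rH*} gives $\rho_H^*\sim\sqrt{2h^*/H}$. Combined with $r_{1,d}\sim C_1(-d)^{1/n}$, the claim $r_{1,d}/\rho_H^*\to 0$ reduces to showing $h^*/(-d)^{2/n}\to+\infty$. This last step is the only delicate point and I expect it to be the main obstacle: one must verify that $h^*$, defined by the forthcoming \eqref{h*}, degenerates strictly more slowly than $r_{1,d}^2\sim C(-d)^{2/n}$. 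This amounts to a careful lower bound on the height of the graph of $\lambda_{\frac{n-1}{n},d}$ between $r_{0,d}$ and $r_{1,d}$ and requires a direct analysis of \eqref{h*}; every other ingredient is a routine Taylor estimate.
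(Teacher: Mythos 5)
Your argument tracks the paper's proof almost step for step: the paper also deduces the first two limits directly from the defining equations, uses the expansion $\lim_{t\to 0}(m+1)I_m(t)/t^{m+1}=1$ to get $r_{1,d}\sim\left(\tfrac{n}{n-1}\right)^{1/n}|d|^{1/n}$ and $r_{0,d}\sim|d|^{1/(n-1)}$ (hence $r_{0,d}/r_{1,d}\to 0$), and obtains $h^*\approx\tfrac{H}{2}(\rho_H^*)^2$ from the integral \eqref{rot alta}, exactly as you do. Your reduction of the last limit to the statement $h^*/(-d)^{2/n}\to+\infty$ is also correct.

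However, that last statement is precisely the content-bearing step of the lemma, and you leave it unproved, saying only that it "requires a direct analysis of \eqref{h*}". This is a genuine gap: without it neither $\lim_{d\to 0}\rho_H^*=0$ nor $\lim_{d\to 0}r_{1,d}/\rho_H^*=0$ is established (your argument that $\rho_H^*\to 0$ also leans on $h^*\to 0$, which again needs the analysis of \eqref{h*}; note that in the paper $h^*$ is \emph{defined} by that integral, it is only an approximate height of $\mathcal A_d$, so one cannot invoke the geometry of $\mathcal A_d$ as a substitute for estimating it). The paper fills the gap by expanding the integrand of \eqref{h*}: with $(n-1)I_{n-1}(t)+d\approx\tfrac{n-1}{n}t^n+d$ and $\sinh^{n-1}(t)\approx t^{n-1}$ one gets
$$
-h^*\approx\frac{n-1}{n}\bigl(r_{1,d}^2-r_{0,d}^2\bigr)+\frac{d}{2-n}\bigl(r_{1,d}^{2-n}-r_{0,d}^{2-n}\bigr),
$$
and since $r_{0,d}\ll r_{1,d}$ the dominant contribution comes from the term $\tfrac{d}{2-n}\,r_{0,d}^{2-n}\approx -\tfrac{1}{n-2}|d|^{1/(n-1)}$, giving $h^*\approx\tfrac{1}{n-2}\left(\tfrac{n-1}{n}\right)^{1/(n-1)}r_{1,d}^{n/(n-1)}$. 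Because $\tfrac{n}{n-1}<2$, this is indeed much larger than $r_{1,d}^2\sim C(-d)^{2/n}$, which is exactly the estimate you needed; equivalently $\rho_H^*\approx c_{n,H}\,r_{1,d}^{n/(2(n-1))}\gg r_{1,d}$. The counterintuitive feature your outline misses is that the divergence of $h^*/r_{1,d}^2$ is driven by the singular behaviour of the integrand near the \emph{lower} endpoint $r_{0,d}$, not by the behaviour near $r_{1,d}$; a naive lower bound ignoring the $d\,t^{1-n}$ term would only give $h^*\gtrsim r_{1,d}^2$ and would not suffice. To complete your proof you must carry out this endpoint analysis (and, if you want the statement for $n=2$ as claimed, treat separately the logarithmic integral $\int d\,t^{-1}dt$, where the same conclusion holds but with $h^*\approx\tfrac12|d|\ln(1/|d|)$).
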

\proof The first two limits follow directly by the definition of $r_{0,d}$ and $r_{1,d}$. Now note that, by standard computations, we have that for any $m\in\mathbb N$
\begin{equation}\label{lim base}
\lim_{t\rightarrow 0}\frac{(m+1)I_m(t)}{t^{m+1}}=1.
\end{equation}
It follows that 
\begin{equation*}
\lim_{d\rightarrow 0}\frac{r_{1,d}}{|d|^{\frac 1n}}= \left(\frac{n}{n-1}\right)^{\frac 1n},\quad
\lim_{d\rightarrow 0}\frac{r_{0,d}}{|d|^{\frac{1}{n-1}}}= 1.
\end{equation*}
Therefore we get $\lim_{d\rightarrow 0}\frac{r_{0,d}}{r_{1,d}}=0.$

Using again \eqref{lim base}, $\lambda_{\frac{n-1}{n},d}(r_{1,d})$ can be estimate for $|d|$ very small as follows:
\begin{eqnarray}
\label{h*} \lambda_{\frac{n-1}{n},d}(r_{1,d})  &\leq&  -h^*:=\int_{r_{0,d}}^{r_{1,d}}\frac{(n-1)I_{n-1}(t)+d}{\sinh^{n-1}(t)}dt \\
\nonumber&\approx& \int_{r_{0,d}}^{r_{1,d}}\frac{\frac{n-1}{n}t^n+d}{t^{n-1}}dt\\
\nonumber&=&\frac{n-1}{n}(r_{1,d}^2-r_{0,d}^2) +\frac{d}{2-n}(r_{1,d}^{2-n}-r_{0,d}^{2-n})\\
&\approx& -\frac{1}{n-2}\left(\frac{n-1}{n}\right)^{\frac{1}{n-1}} r_{1,d}^{\frac{n}{n-1}},\label{eq:rh}
\end{eqnarray}

By the use of \eqref{lim base} and definition of $h^*$ \eqref{h*}, when $|d|$ is close to zero we have
\begin{eqnarray*}
h^*&=&\int_0^{\rho_H^*}\frac{nHI_{n-1}(t)}{\sqrt{\sinh^{2n-2}(t)-n^2H^2I_{n-1}^2(t)}}dt\\
&\approx&H\int_0^{\rho_H^*}tdt=\frac H2(\rho_H^*)^2,
\end{eqnarray*}
Together with \eqref{eq:rh} this implies that, as $d\rightarrow 0$, $\rho_H^*\approx c_{n,H}r_{1,d}^{\frac{n}{2(n-1)}}$ where $c_{n,H}=\sqrt{\frac{2}{H(n-2)}}\left(\frac{n-1}{n}\right)^{\frac{1}{2(n-1)}}$. Hence the results involving $\rho_H^*$ follow easily.
\cvd

\subsection{Horizontal cylinders}
\label{hor-cyl}
In \cite{BS}, P. B\'erard and R. Sa Earp  describe constant mean curvature hypersurfaces of $\hh^n\times\rr$ which are invariant under hyperbolic translations of $\hh^n.$ 
Following their notations, for any $m\in\mathbb N$, for any $H>0$ and for suitable choice of the parameter $d,$ we define
\begin{eqnarray}
\nonumber J_m(t) &=& \int_0^t\cosh^m(\tau) d\tau;\\
 \mu_{H,d}(\rho) & =& \int_{\rho_0}^{\rho}\frac{nHJ_{n-1}(t)+d}{\sqrt{\cosh^{2n-2}(t)-(nHJ_{n-1}(t)+d)^2}}dt,\label{mu}
\end{eqnarray}
where $\rho_0\geq 0$ is the infimum of the interval where the integrand function exists.

%{\color{blue}\bf  Prima possibilita'} 

%The graph of the functions $\mu_{H,d}$ are the generating curves of hypersurfaces that are slice-wise invariant  by translations along all the geodesics passing through the origin of a hyperplane $\pi$  of $\hh^n.$ Notice that that the image of any point of a geodesic through the origin, orthogonal to $\pi,$  under the described hyperbolic translations,  is an equidistant hypersurface to $\pi$ in $\hh^n\times\{0\}.$ 

%{\color{blue} \bf Seconda possibilita', ma mi piace di piu' la prima se ti torna} 

%The graph of the functions $\mu_{H,d}$ are the generating curves of hypersurfaces constructed as follows.Let $\gamma$  be a complete geodesic through the origin  of the hyperbolic space $\hh^n,$ parametrized by the signed distance $\rho$ to the origin. Let $\pi$ be the hyperbolic hyperplane orthogonal to $\gamma$ at the origin. For each geodesic $\beta\in\pi$  passing through the origin, we consider the hyperbolic translation along $\beta$  in $\hh^n.$ We note that the image of any point of $\gamma$ under the hyperbolic translations along all geodesics of $\pi$ passing through the origin is an equidistant hypersurface to $\pi$ in $\hh^n\times\{0\}.$ We extend the hyperbolic translation along $\beta$ slice-wise to an isometry of $\hh^n\times\rr.$ 

%{\color{blue} \bf Terza possibilita'} 
The graphs of the functions $\mu_{H,d}$ are the generating curves of hypersurfaces constructed as follows. Let $\gamma$  be a complete geodesic through the origin  of the hyperbolic space $\hh^n\times\{0\},$ parametrized by the signed distance $\rho$ to the origin. Let $\pi$ be the hyperplane of $\hh^n\times\{0\}$ orthogonal to $\gamma$ at the origin. Consider the curve $(\rho,\mu_{H,d}(\rho))$ embedded in the plane $\gamma\times\mathbb R$. For any $\rho$ let $\pi_{\rho}$ be the vertical translation of $\pi$ in the slice $\hh^{n}\times\{\mu_{H,d}(\rho)\}$. The desired hypersurface is obtained translating each point $(\rho,\mu_{H,d}(\rho))$ along any geodesic of $\pi_{\rho}$ passing through the origin of $\pi_{\rho}$. By \cite{BS} such hypersurface has constant curvature $H$.% and the functions \eqref{mu} generate, up to isometries of the ambient space, every constant mean curvature invariant under hyperbolic translations of $\hh^n$. 

The case of $n=2$ has been studied in detail in \cite{On, Man, ST, NSST}. 

In this paper we are only interest in the case $H>\frac{n-1}{n}$ and $d=0$. For any fixed $H$ we denote with $\mathcal C_H$ such hypersurface. The value of $n$ will be clear from the context. The function $\mu_{H,0}$ is defined for $\rho\in[0,R_{\mathcal C}]$, where $R_{\mathcal C}$ is the unique solution of the equation
\begin{equation}\label{rc}
\cosh^{n-1}(\rho)-nHJ_{n-1}(\rho)=0.
\end{equation}
When $n=2$, $\mathcal C_H$ was explicitly parametrized by J. M. Manzano in \cite{Man}: using the half-space model for $\hh^2$ we have that, up to isometry of the ambient space, 

\begin{equation*}
\mathcal C_H(u,v)=\left(\frac{e^v\sin u}{\sqrt{4H^2-1}}, e^v,\frac{2H}{\sqrt{4H^2-1}}\tan^{-1}\frac{\cos u}{\sqrt{4H^2-1+\sin^2 u}}\right)
\end{equation*}
where $u,v\in\rr$. In this case it is evident that $\mathcal C_H$ has the topology of the cylinder and it is a bi-graph on the non compact domain of $\hh^2$  bounded by the two equidistant curves of constant geodesic curvature $-\frac{1}{2H}$
$$
\mathcal C_H\cap\{t=0\}=\left\{\left(\left.\frac{\pm e^v}{\sqrt{4H^2-1}},e^v,0\right)\right|v\in\rr\right\}.
$$ 
The distance between these two curves is $2R_{\mathcal C}$, hence fix $v=0$, then we can compute
\begin{equation}
R_{\mathcal C}=\frac 12d_{\hh^2}\left(\left(\frac{-1}{\sqrt{4H^2-1}},1\right), \left(\frac{1}{\sqrt{4H^2-1}},1\right)\right)=\frac 12\ln\left(\frac{2H+1}{2H-1}\right).\label{rc2}
\end{equation}

%\begin{eqnarray}
%\nonumber R_{\mathcal C}&=&\frac 12d_{\hh^2}\left(\left(\frac{-1}{\sqrt{4H^2-1}},1\right), \left(\frac{1}{\sqrt{4H^2-1}},1\right)\right)\\
%&=&\frac 12\ln\left(\frac{2H+1}{2H-1}\right).\label{rc2}
%\end{eqnarray}

\begin{figure}[!h]%\label{fig3}
\centering
\includegraphics[width=0.8\textwidth]{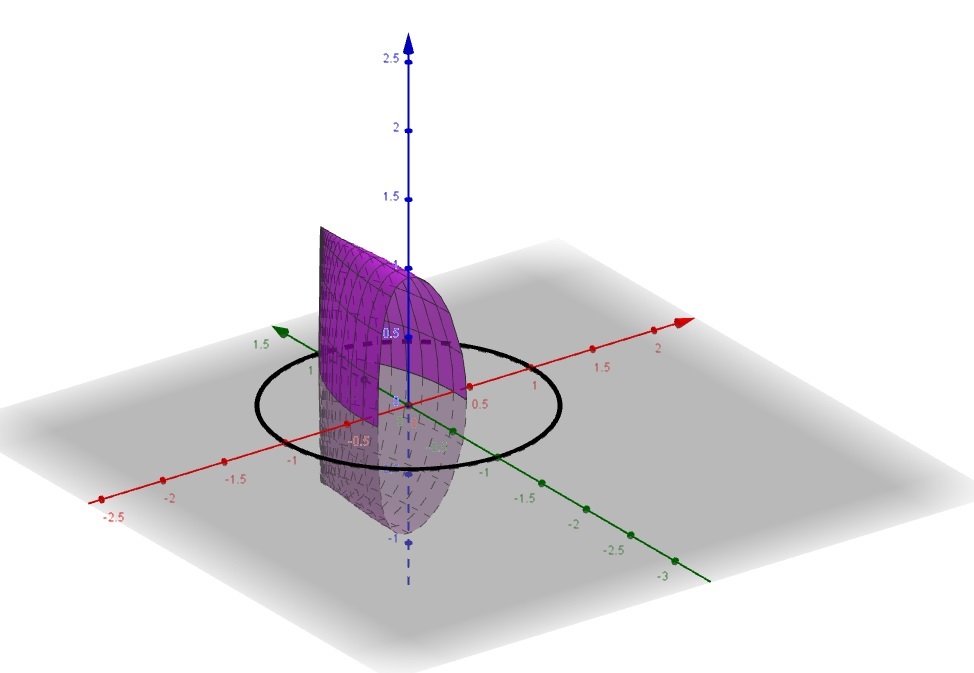}
\caption{Horizontal cylinder $\mathcal C_H$ in $\hh^2\times\rr$ in the disk model, H=0,77.}
\end{figure}

When $n\geq 3$ we do not have an explicit parametrization for $\mu_{H,0}$, but Theorem 2.4, part 1 of \cite{BS} can be easily extended to the case $H>\frac{n-1}{n}$ proving that $h_{\mathcal C}:=\lim_{\rho\rightarrow R_{\mathcal C}}\mu_{H,0}(\rho)$ is finite, that $\mu_{H,0}$ is  a strictly increasing and convex function with a vertical tangent at $\rho=R_{\mathcal C}$. Therefore the reflections of $\mu_{H,0}$  with respect to $\rho=0$ and $t=h_{\mathcal C}$ produce a compact strictly convex simple curve. It follows that the hypersurface $\mathcal C_{H}$ is embedded and it is symmetric with respect to a horizontal hyperplane and the parts above and below this hyperplane are vertical graphs. Moreover  
$\mathcal C_{H}$ has the topology of $\mathbb S^1\times\mathbb R^{n-1}.$ 
Note that $\lim_{H\rightarrow\frac{n-1}{n}}R_{\mathcal C}$ is infinite for $n=2$ and finite otherwise. 

We would like to conclude this section comparing the radius of the cylinder $\mathcal C_H$ and of the compact sphere $\mathcal S_H$ with the same mean curvature.

\begin{Lemma}\label{rs vs rc}
For any $n\geq 2$ and for any $H>\frac{n-1}{n}$ we have
$$
R_{\mathcal S}>R_{\mathcal C},
$$
where $R_{\mathcal S}$ is the solution of equation \eqref{rs} and $R_{\mathcal C}$ is the solution of equation \eqref{rc}.
\end{Lemma}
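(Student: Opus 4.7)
The plan is to reduce the inequality $R_{\mathcal S}>R_{\mathcal C}$ to a comparison between the two defining functions. Set
$$
f(\rho):=\sinh^{n-1}(\rho)-nHI_{n-1}(\rho),\qquad g(\rho):=\cosh^{n-1}(\rho)-nHJ_{n-1}(\rho).
$$
Since $\lambda_{H,0}$ is defined on $[0,R_{\mathcal S}]$, the quantity $\sinh^{2n-2}-n^2H^2I_{n-1}^2$ must be positive on $(0,R_{\mathcal S})$, so $f>0$ on $(0,R_{\mathcal S})$ with $f(0)=0=f(R_{\mathcal S})$; analogously $g>0$ on $[0,R_{\mathcal C})$ with $g(R_{\mathcal C})=0$. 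Therefore, to conclude $R_{\mathcal C}<R_{\mathcal S}$ it suffices to check $f(R_{\mathcal C})>0$.

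Using the defining relation $\cosh^{n-1}(R_{\mathcal C})=nHJ_{n-1}(R_{\mathcal C})$, the inequality $f(R_{\mathcal C})>0$ is equivalent to
$$
\sinh^{n-1}(R_{\mathcal C})\,J_{n-1}(R_{\mathcal C})>\cosh^{n-1}(R_{\mathcal C})\,I_{n-1}(R_{\mathcal C}).
$$
This is a purely analytic inequality between $I_{n-1}$ and $J_{n-1}$, and the key observation is that $\tanh^{n-1}$ is strictly increasing on $[0,\infty)$.

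Indeed, for every $0\leq\tau<\rho$ one has $\tanh^{n-1}(\tau)<\tanh^{n-1}(\rho)$, which rearranges to
$$
\sinh^{n-1}(\tau)\cosh^{n-1}(\rho)<\sinh^{n-1}(\rho)\cosh^{n-1}(\tau).
$$
Integrating this pointwise inequality in $\tau$ over $[0,\rho]$ (with $\rho$ fixed) yields
$$
\cosh^{n-1}(\rho)\,I_{n-1}(\rho)<\sinh^{n-1}(\rho)\,J_{n-1}(\rho)
$$
for every $\rho>0$. Applying this at $\rho=R_{\mathcal C}$ gives the desired strict inequality, hence $f(R_{\mathcal C})>0$ and so $R_{\mathcal S}>R_{\mathcal C}$.

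I do not foresee any real obstacle here; the only thing to be careful about is the initial reduction (that $f$ stays positive up to $R_{\mathcal S}$, which is built into the definition of $R_{\mathcal S}$ via the domain of $\lambda_{H,0}$) and the direction of the inequality. The monotonicity of $\tanh^{n-1}$ followed by integration is the mechanism that converts the pointwise comparison of $\sinh^{n-1}$ versus $\cosh^{n-1}$ into the required comparison of their antiderivatives $I_{n-1}$ and $J_{n-1}$.
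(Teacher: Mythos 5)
Your argument is correct, and it takes a genuinely different route from the paper's. You evaluate $f(\rho)=\sinh^{n-1}(\rho)-nHI_{n-1}(\rho)$ directly at $R_{\mathcal C}$ and reduce its positivity, via the defining relation $\cosh^{n-1}(R_{\mathcal C})=nHJ_{n-1}(R_{\mathcal C})$, to the inequality $\cosh^{n-1}(\rho)\,I_{n-1}(\rho)<\sinh^{n-1}(\rho)\,J_{n-1}(\rho)$, which you obtain by integrating the strict monotonicity of $\tanh^{n-1}$; this treats all $n\geq 2$ uniformly and uses nothing beyond elementary calculus. The paper instead interposes the critical point $\tilde R=\tanh^{-1}\left(\frac{n-1}{nH}\right)$ of $f$, observes $\tilde R<R_{\mathcal S}$, and then shows $R_{\mathcal C}\leq\tilde R$ by a case split: for $n=2$ one has $\tilde R=R_{\mathcal C}$ exactly from the explicit formula \eqref{rc2}, while for $n\geq3$ one feeds the recursion $mJ_m(x)=\sinh(x)\cosh^{m-1}(x)+(m-1)J_{m-2}(x)$ and the positivity of $J_{n-3}$ into $g(\tilde R)$ to get $g(\tilde R)<0$, concluding because $g$ is strictly decreasing. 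Your proof is shorter and avoids both the case distinction and the recursion; the paper's version buys the explicit localization $R_{\mathcal C}\leq\tilde R<R_{\mathcal S}$, which gives quantitative information on where the two radii sit. One small point to make explicit in your write-up: the step ``$f(R_{\mathcal C})>0$ implies $R_{\mathcal C}<R_{\mathcal S}$'' needs not only that $f>0$ on $(0,R_{\mathcal S})$ but also that $f$ is negative beyond $R_{\mathcal S}$ (equivalently, that $R_{\mathcal S}$ is the unique positive zero of $f$); this is immediate because $f$ vanishes at $0$, increases up to $\tilde R$ and decreases thereafter, but it deserves a sentence.
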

\proof 
We start by  introducing the following iteration formula taken from \cite{BS}:
\begin{equation}\label{iter J}
\left\{\begin{array}{rcl}
J_0(x)&=&x;\\
J_1(x)&=&\sinh(x);\\
mJ_m(x)&=&\sinh(x)\cosh(x)^{m-1}+(m-1)J_{m-2}(x),\quad \forall m\geq 2;
\end{array}\right.
\end{equation}

Notice that $J_m(x)>0$ for any $m$ and any $x>0$. Fix $n$ and $H$ as in the statement. Let us define the real functions
\begin{eqnarray*}
\varphi(x) & =& \sinh^{n-1}(x)-nHI_{n-1}(x);\\
\psi(x) & =& \cosh^{n-1}(x)-nHJ_{n-1}(x).
\end{eqnarray*}

By trivial arguments we have that the  unique strictly positive critical point of $\varphi$ is $\tilde R:=\tanh^{-1}\left(\frac{n-1}{nH}\right)$ and that $\tilde R< R_{\mathcal S}$.

When $n=2$ by \eqref{rc2} we have that $\tilde R=R_{\mathcal C}$ and the result holds.

Now let $n\geq 3$. By \eqref{iter J} and definition of $\tilde R$ we have that
\begin{eqnarray*}
\psi(\tilde R) & =& \cosh^{n-1}(\tilde R)-\frac{nH}{n-1}\left(\sinh(\tilde R)\cosh^{n-2}(\tilde R)+(n-2)J_{n-3}(\tilde R)\right)\\
&<&\cosh^{n-2}(\tilde R)\left(\cosh(\tilde R)-\frac{nH}{n-1}\sinh(\tilde R)\right)\\
&=&0=\psi(R_{\mathcal C})
\end{eqnarray*}

where in the inequality, we use that  $J_{n-3}>0.$ 
 Therefore $R_{\mathcal C}<\tilde R$ holds because $\psi$ is strictly decreasing. 
\cvd

\section{Main Theorem}\label{proof}

In this Section, we prove the generalization of Ros-Rosenberg theorem, stated in the Introduction.

A closed hypersurface $\Gamma$ in the hyperbolic space $\hh^n$  is said \emph{horoconvex} if all the principal curvatures of $\Gamma$ are strictly larger than $1$. Given a horoconvex hypersurface $\Gamma$, we call the exterior (resp. interior) radius of $\Gamma$ the minimum (resp. maximum) of the radii $\rho$ such that for any $p\in\Gamma$ there exists a geodesic sphere $S$ with radius $\rho$ tangent to $\Gamma$ in $p$ and such that $\Gamma$ lies in (resp. encloses) the closed disk bounded by $S$. We denote by $r_{ext}$ the exterior radius and by $r_{int}$ the interior radius. It is clear that, for any $\Gamma$, we have $r_{ext}\geq r_{int}$ and that the equality holds if and only if $\Gamma$ is a geodesic sphere of radius $r_{ext}$. 

The main result of the paper is the following.

\begin{Theorem}\label{theorem-main}
Let $M$  be a $n$-dimensional hypersurface of $\hh^n\times\rr$ embedded in  $\hh^n\times]0,\infty]$ with constant mean curvature $H>\frac{n-1}{n}$ and boundary $\partial M=\Gamma$. Assume that $\Gamma$ is a closed $(n-1)$-dimensional horoconvex hypersurface of the slice  $P=\hh^n\times\{0\}$ satisfying the pinching $2r_{int}>r_{ext}$. Then there is a constant $\delta(n,H)>0$ depending only on $n$ and $H$ such that if $r_{ext}\leq\delta(n,H)$, then $M$ is topologically a disk.
Moreover either  $ M$ is a graph over the domain $\Omega$ bounded by $\Gamma$ or $N= M\cap(\Omega\times]0,\infty[)$ is a graph over $\Omega$ and $ M\setminus N$ is a graph over $\Gamma\times]0,\infty[$ with respect to the lines orthogonal to $\Gamma\times]0,\infty[.$
\end{Theorem}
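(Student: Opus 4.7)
The plan is to adapt the strategy of Ros--Rosenberg \cite{RR}, in the cylindrical-barrier variant of Semmler \cite{S}, to the product $\hh^n\times\rr$, using the comparison hypersurfaces of Section \ref{sez-barriere} in place of Euclidean spheres and cylinders, and the hyperbolic Lima\c con of Section \ref{sez-lima} to quantify how far one can push the Alexandrov reflection method in the presence of the boundary $\Gamma$. Two features of the ambient geometry, already flagged in the introduction, force the shape of the proof: hyperbolic isometries admit no dilations, so the rescaling step of \cite[Theorem~1]{RR} is unavailable; and by Remark \ref{no-flat} the spheres $\mathcal S_H$ do not degenerate to a horizontal slice but to the entire graph $\mathcal S_{\frac{n-1}{n}}$ as $H\searrow\frac{n-1}{n}$. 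The constant $\delta(n,H)$ will be fixed so small that the characteristic scales of Lemma \ref{raggi D} are well separated and $\mathcal S_H$, $\mathcal C_H$ and $\mathcal A_d$ all fit comfortably around the small boundary $\Gamma$.

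First I would confine $M$ in a compact region. Lowering a vertical translate of $\mathcal S_H$ from $t=+\infty$ until first contact with $M$ gives, by the maximum principle, a uniform height bound depending only on $n$ and $H$: the first contact must occur at an interior point because $r_{ext}\leq\delta(n,H)<R_{\mathcal S}$ forces $\Omega$ to sit strictly inside the equatorial disk of $\mathcal S_H$. Analogously, using the horizontal cylinders $\mathcal C_H$ of Section \ref{hor-cyl} on each side of $\Omega$ -- whose footprint in $P$ is a strip bounded by equidistant curves of geodesic curvature less than $1$, while $\Gamma$ is horoconvex with principal curvatures strictly greater than $1$ -- produces a horizontal bound, confining $M$ to a uniform neighbourhood of $\Omega\times\rr$ of width at most $2R_{\mathcal C}$.

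The core of the proof is Alexandrov reflection with respect to hyperbolic vertical hyperplanes $\Pi=\Pi_0\times\rr$, with $\Pi_0\subset\hh^n$ totally geodesic. Fix a horizontal direction and a centre $C\in\Omega$ realising $r_{int}$, and slide $\Pi$ orthogonally from outside $\Omega$ toward $C$. The delicate point is to estimate when the reflected boundary $\Gamma^\ast$ can enter $\Omega$, which is the moment that would stop the Alexandrov monotonicity. Here the Lima\c con enters explicitly: for any point $A$ outside a geodesic sphere $\mathcal C$ of radius $c$ centred at $C$, the locus of reflections of $A$ across all totally geodesic hyperplanes tangent to $\mathcal C$ is precisely the hyperbolic Lima\c con $\hl$ of Definition \ref{def lima} with base point $A$ and parameters $a=d(A,C)$ and $c$. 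Applied to the osculating spheres of $\Gamma$, whose radii lie in $[r_{int},r_{ext}]$ by horoconvexity, the pinching $2r_{int}>r_{ext}$ places us in the regime $a>2c$ of Lemma \ref{stima loop}, so the reflected image of $\Gamma$ stays in a controlled annulus around $C$ whose inner radius is strictly positive. Combined with the barriers $\mathcal S_H$, $\mathcal C_H$ and $\mathcal A_d$ preventing unwanted interior tangencies of $M$ with $M^\ast$, this allows $\Pi$ to pass through $C$ with $M^\ast$ remaining on the correct side, so that the standard Alexandrov alternative (tangential contact or symmetry) applies.

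Running this argument for every horizontal direction and combining the resulting symmetries with the confinement of the previous step yields the dichotomy in the statement: either $M$ is a vertical graph over $\Omega$, or the part $N$ of $M$ above $\Omega$ is a vertical graph and $M\setminus N$ is a horizontal graph over $\Gamma\times]0,\infty[$ along the lines orthogonal to $\Gamma\times]0,\infty[$; in both cases $M$ is a topological disk. The main obstacle I expect is the Alexandrov step itself: one has to choose $\delta(n,H)$ simultaneously so small that the Lima\c con bounds of Lemma \ref{stima loop} give the required room, the characteristic scales of Lemma \ref{raggi D} are strictly ordered, and the comparison hypersurfaces of Section \ref{sez-barriere} remain strict barriers throughout the sliding; the pinching $2r_{int}>r_{ext}$ is used precisely to keep the geometry in the regime $a>2c$ where the smaller loop of the Lima\c con has a nontrivial guaranteed radius.
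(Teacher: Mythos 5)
Your proposal reproduces the right general flavour (Alexandrov reflection, the Lima\c con to quantify how far the reflected boundary can be pushed, the comparison hypersurfaces of Section \ref{sez-barriere} as barriers), and the observation that the pinching $2r_{int}>r_{ext}$ is exactly what puts the Lima\c con in the regime $a>2c$ is correct and is indeed how Lemma \ref{lemmaRR} works. However, as a proof it has two genuine gaps.

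First, your initial confinement step does not go through as stated. Lowering a translate of $\mathcal S_H$ (or rather of a spherical cap, since the full sphere cannot be ``lowered from $t=+\infty$'' past $M$ without its boundary interfering) only produces a contradiction at an \emph{interior} contact point; the contact can perfectly well occur on the boundary of the cap against a point of $M$ lying \emph{outside} the vertical cylinder over $\Omega$, and a priori nothing prevents $M$ from leaving that cylinder. Likewise, sliding the horizontal cylinder $\mathcal C_H$ sideways past $M$ is only possible if $M$ fits under the cylinder, i.e.\ if $h_M<2h_{\mathcal C}$. This is why the paper splits the argument into the cases $M$ \emph{short} and $M$ \emph{tall} (Definition \ref{def short tall}): in the short case the order of the barriers is reversed with respect to your sketch (first the half-cylinders $\mathcal C^+_H$ give the horizontal bound $M\subset D(r_{ext}+R_{\mathcal C})\times[0,2h_{\mathcal C}[$, and only then, using $R_{\mathcal S}>R_{\mathcal C}$ from Lemma \ref{rs vs rc} and $r_{ext}\leq\delta(n,H)=R_{\mathcal S}-R_{\mathcal C}$, do the caps $\mathcal S_H^+$ confine $M$ to the cylinder over $\Omega$), while in the tall case no such confinement or height bound is available or used.

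Second, and more seriously, the tall case is where the real work lies and your sketch does not contain the idea that makes it work. After the standard reflections show that the part of $M$ above $t=h_M/2$ and the part outside the cylinder over $\Omega$ are graphs, one must still rule out components of $M$ inside $\Omega\times[0,h_M/2]$. The paper does this in three steps: a spherical-cap comparison locates a point of $M$ at height $h_M-h^*$ at distance at least $\rho_H^*$ from the axis; a \emph{second} application of the Lima\c con, now at height $h_M-h^*$ with parameters $a>\rho_H^*-r_{ext}$, $c=r_{ext}$, shows that $W$ contains a disk $D(R)$ with $R>\rho_H^*-3r_{ext}$ there, whence horizontal Alexandrov reflection empties $D(R)\times[h^*,h_M-h^*]$; and finally the embedded piece $\Sigma$ of the \emph{immersed annulus} $\mathcal A_d$ (mean curvature $\frac{n-1}{n}<H$) is translated down through $W$ to sweep out $\Omega\times[0,h^*]$. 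This last step is the one that cannot be replaced by ``combining symmetries'': it requires the choice $2r_{int}-r_{ext}<r_{0,d}<r_{min}$ tying $d$ to the pinching, and the scale separation $r_{0,d}\ll r_{1,d}\ll\rho_H^*$ of Lemma \ref{raggi D} to guarantee both $\rho_H^*>3r_{ext}$ and $R>r_{ext}+r_{1,d}-r_{0,d}$, so that the upper boundary $C_1$ of $\Sigma$ stays inside the emptied slab while $C_0$ sweeps $\Omega$. Your proposal mentions $\mathcal A_d$ only as one barrier among others ``preventing unwanted tangencies'' and proposes instead to push a vertical reflecting hyperplane through a centre of $\Omega$ to reach the ``Alexandrov alternative (tangential contact or symmetry)''; that cannot work, since $M$ is not symmetric in general and the reflection must be stopped well before the centre (the paper stops it when $Q_v$ is tangent to the sphere $\cc$ of radius $r_{ext}-r_{int}$), which is precisely what the Lima\c con quantifies.
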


\begin{Remark}{\em  In the case  $\Gamma$ is a geodesic sphere and $M$ satisfies the assumption of Theorem \ref {theorem-main}, it is easy to prove that $M$ is rotationally symmetric, by the use of Alexandrov reflections with respect to vertical hyperplanes. Hence $M$ is a portion of  a vertical translation of the hypersurface $\mathcal S_H,$ defined in Section \ref{rotaz}.}
\end{Remark}

From now on, we suppose that $\Gamma$ is not a geodesic sphere, hence $r_{ext}>r_{int}.$  The strategy of the proof of Theorem \ref{theorem-main} is inspired by the one in \cite{NS, S} and it is divided into two cases according to the following definition.

\begin{Definition}\label{def short tall}{\em
A hypersurface $M$ of $\hh^n\times\rr$ with constant mean curvature $H>\frac{n-1}{n}$ is called {\em short} if it is contained between   two slices at distance smaller  than the height of the cylinder $\mathcal C_H$ with the same mean curvature,  defined in Section \ref{hor-cyl}. $M$ is called {\em tall} otherwise.}
\end{Definition}

%\begin{Definition}\label{def short tall}{\em
%A hypersurface $M$ of $\hh^n\times\rr$ with constant mean curvature $H>\frac{n-1}{n}$ is called {\em short} if it is contained between   two slices at distance smaller  than the heigth of the cylinder $\mathcal C_H$ with the same mean curvature,  defined in \eqref{cil espliciti}. $M$ is called {\em tall} otherwise.}
%\end{Definition}

When $M$ is short, the proof of Theorem \ref{theorem-main} is more direct because shortness yields that $M$ is a graph over $\Omega$. When $M$ is tall, we will prove that $M$ is a union of pieces, each one graph in some system of coordinates.

In \cite{RR}  the authors introduced the notion of small surface with constant mean curvature $H$: a hypersurface contained in a ball of mean curvature larger than $H$. Here, the vertical and the horizontal directions are not homogeneous, hence the suitable notion is that of short hypersurface.

The first  step, common to both cases short and tall, is to show that  in a small vertical cylinder, $M$ is a graph.  
This will be proved in the following Lemma, that is  the analogous of Lemma 3 of \cite{RR}. 
Differently from \cite{RR}, we need a quantitative estimate of the radius of the cylinder, that  will be evaluated using the Lima\c con described in Section \ref{sez-lima} .

In the following, we let $W$ be the domain in $\hh^n\times{\mathbb R}$ bounded by $M$ and $\Omega$ and we will denote by $D(\rho)$ any disk with radius $\rho$ in a  slice $\hh^n\times\{t\}$,  where the value of $t$ and the center of $D(\rho)$ will be clear from the context.

\begin{Lemma}\label{lemmaRR}
Let $M$ and  $\Gamma$  be  as in the statement of Theorem \ref{theorem-main}. Then, there exists a disk $D(r_{min})$  in  $\hh^n\times\{0\},$ where $r_{min}$     depends only of the principal curvatures of $\Gamma=\partial M,$ such that $M\cap (D(r_{min})\times{\mathbb R})$ is a graph. In particular, since $\Gamma$ satisfies the pinching $2r_{int}>r_{ext}$, then $2r_{int}-r_{ext}< r_{min}< r_{int}$ holds. 
\end{Lemma}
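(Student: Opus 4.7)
The plan is to adapt the moving-plane scheme of Lemma 3 of \cite{RR} by performing Alexandrov reflection across the vertical totally geodesic hyperplanes of $\hh^n\times\mathbb R$, and to convert the geometric stopping condition into a quantitative estimate on $r_{min}$ by means of the hyperbolic Lima\c con of Section \ref{sez-lima}. Concretely, I would fix a horizontal unit vector $v$ based at some point of $P$ and consider the one-parameter family $\Pi_s$ of vertical totally geodesic hyperplanes orthogonal to $v$, parametrized by signed hyperbolic distance $s$ along a horizontal geodesic in direction $v$. For $s\gg 0$ the hyperplane $\Pi_s$ lies past $M$; decrease $s$ and at each stage reflect $M\cap\Pi_s^+$ across $\Pi_s$. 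By the maximum principle for constant mean curvature hypersurfaces, the sliding can continue as long as the reflected image $(\Gamma\cap\Pi_s^+)^*$ remains in $\bar\Omega$: an interior tangency would force $M$ to be globally symmetric about $\Pi_s$, which is incompatible with the assumption that $\Gamma$ is not a geodesic sphere. The sliding thus stops exactly when the reflection of some boundary point $q_+\in\Gamma\cap\Pi_s^+$ first meets a point of $\Gamma\cap\Pi_s^-$.

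The second step is to locate this stopping position via the Lima\c con. For each $q\in\Gamma$, horoconvexity produces an interior tangent geodesic sphere of radius $r_{int}$ with center $C_q\in\Omega$ contained in $\bar\Omega$, and an exterior tangent sphere of radius $r_{ext}$ containing $\Gamma$. When $\Pi_s$ is tangent to such an interior sphere at a point $P$, Definition \ref{def lima} identifies the reflection of any boundary point $q'$ across $\Pi_s$ as a point of a hyperbolic Lima\c con based at $q'$, centered at $C_q$, with parameters $a=d(q',C_q)$ and $c=r_{int}$. Exploiting the exterior tangent sphere one bounds $a$ from above in terms of $r_{ext}$; the pinching $2r_{int}>r_{ext}$ then places us exactly in the regime $a>2c$ of Lemma \ref{stima loop}. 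Part (2) of that lemma asserts that the smaller loop of the Lima\c con bounds a disk of radius $a-2c$ centered at $C_q$, strictly larger than $2r_{int}-r_{ext}$. Taking the intersection of these disks as $q$ ranges over $\Gamma$ and as $v$ ranges over the horizontal directions, one obtains by continuity a disk $D(r_{min})\subset\Omega$ with $r_{min}>2r_{int}-r_{ext}$ such that, for every $A\in D(r_{min})$ and every direction $v$, the moving plane can be slid past $A$. The upper bound $r_{min}<r_{int}$ follows because any graph piece of $M$ must lie above a subdomain of an interior osculating ball, of radius $r_{int}$, and the equality $r_{min}=r_{int}$ is reserved for $\Gamma$ a geodesic sphere, which has been excluded.

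Once the Alexandrov sliding is known to reach every $A\in D(r_{min})$ from every horizontal direction, the standard Alexandrov argument (see \cite[Chapter 7]{H}) forces the vertical line through $A$ to meet $M$ at most once, and hence $M\cap(D(r_{min})\times\mathbb R)$ is a vertical graph. The main obstacle I anticipate is the translation from the Alexandrov stopping position to the Lima\c con construction: in the hyperbolic setting, reflections across tangent hyperplanes to a sphere do not act on the base point as a simple translation as in the Euclidean case, and the pinching hypothesis $2r_{int}>r_{ext}$ must be used precisely to ensure that one lies in the non-degenerate regime $a>2c$ of Lemma \ref{stima loop}-(2), where the estimate on the smaller loop yields a concentric disk of radius at least $2r_{int}-r_{ext}$ as the safe reflection region.
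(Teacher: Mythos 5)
Your overall strategy (vertical Alexandrov reflections quantified by the Lima\c con of Section \ref{sez-lima}) is the right one, but there are two concrete problems. First, you have omitted the preliminary Alexandrov reflection with \emph{horizontal} hyperplanes, and without it the conclusion does not follow: being able to slide vertical hyperplanes past a point $A$ from every horizontal direction does not by itself force the vertical line through $A$ to meet $M$ at most once, since vertical reflections only yield horizontal symmetry information. The paper first moves horizontal slices down; either one reaches $P$ and $M$ is already a graph, or there is a first height $t_0$ at which the reflected surface touches $\Gamma$ at some point $q$, whence $\{q\}\times(0,2t_0)\subset \mathrm{int}(W)$ and the part of $M$ above $\{t=t_0\}$ is a vertical graph. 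The vertical reflections are then used to transport the segment $\{q\}\times(0,2t_0)$ to segments over the reflected points $q_v$, so that over the whole smaller loop of the resulting Lima\c con the vertical lines stay in $\mathrm{int}(W)$ up to height $2t_0$; only the combination with the graph property above $t_0$ makes $M\cap(D(r_{min})\times\rr)$ a graph. In particular the Lima\c con must be based at this specific point $q$, not at an arbitrary boundary point.

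Second, your choice of Lima\c con parameters is wrong, and with it the application of Lemma \ref{stima loop}. You reflect across hyperplanes tangent to the \emph{interior} osculating sphere, giving $c=r_{int}$ and $a$ bounded in terms of $r_{ext}$; then $a>2c$ would essentially require $r_{ext}>2r_{int}$, which is the negation of the pinching hypothesis, and the radius $a-2c$ of the concentric disk would not reduce to $2r_{int}-r_{ext}$. In the paper one stops the reflections when the vertical hyperplane becomes tangent to the sphere $\cc$ concentric with the exterior tangent sphere $\cc_{ext}$ at $q$ and of radius $r_{ext}-r_{int}$; the resulting Lima\c con has base point $q$ and parameters $a=r_{ext}$, $c=r_{ext}-r_{int}$, so that the pinching $2r_{int}>r_{ext}$ gives precisely $a>2c$ and, by Lemma \ref{stima loop}-2, a safe disk of radius $a-2c=2r_{int}-r_{ext}$ centered at the center of $\cc_{ext}$, while Lemma \ref{stima loop}-1 gives the upper bound $r_{min}<a-c=r_{int}$. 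A minor further point: an interior tangency in the vertical sliding would only make $M$ symmetric about the hyperplane, which is not incompatible with $\Gamma$ being non-spherical; the reason the sliding can proceed to the required position is the existence of the interior tangent disks of radius $r_{int}$ at every point of $\Gamma$, not the exclusion of a symmetry.
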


\proof
Consider Alexandrov reflection with horizontal hyperplanes coming down. If we can arrive to $P:=\hh^n\times\{0\}$ without having a contact point between $M$ and its reflection, then $M$ is a graph over $\Omega$ and the result holds. Otherwise there is a height $t_0>0$ where the reflected hypersurface touches $\Gamma$ for the first time. Let $q\in\Gamma$ be the first touching point.  So $\{q\}\times]0,\infty)$ intersects $M$ exactly once and $\{q\}\times(0,2t_0)\subset int(W)$. Notice that the part of $M$ above $\hh^n\times\{t_0\}$ is a vertical graph.

Now let $v$ be a unit horizontal vector and $Q_v$ be a vertical hyperplane orthogonal to $v$. 
We will do Alexandrov reflection with the family of hyperplanes parallel to $Q_v.$
 Starting from  a $Q_v$ far away from $M$, we can move $Q_v$ parallel to itself  until it touches $M$ for the first time. Keep moving $Q_v$ and  let $M^*$ be the reflection, across $Q_v$, of  the part of $M$ behind $Q_v$. Continue moving $Q_v$ until there is a first touching point between $M$ and $M^*$. Since for any $p\in\Gamma$ the domain $\Omega$ bounds a disk of radius $r_{int}$ tangent at $p$ to $\Gamma$, then we could do Alexandrov reflections at  least until $Q_v$ is at distance $r_{int}$ from $\Gamma$. In order to avoid the dependence on the point $q$, we  stop to do reflection earlier, precisely when $Q_v$  becomes tangent to $\cc$, where $\cc$ is defined in the following way: let $\cc_{ext}$ be the geodesic sphere of $P$ with radius $r_{ext}$ and tangent to $\Gamma$ in $q$ which encloses $\Gamma$, then $\cc$ is the geodesic sphere with the same center of $\cc_{ext}$ and radius $r_{ext}-r_{int}$. 

For any unit horizontal vector $v$ let $q_v$ be the reflection of the point $q\in P$ with respect to $Q_v$ tangent to $\cc$. Let $\hl$ be set of all such points $q_v$. Since $Q_v$ is vertical and $Q_v\cap P$ is a totally geodesic hyperplane of the hyperbolic space $P$, then $q_v\in P$ and $\hl$ is a hyperbolic Lima\c con as in Definition \ref{def lima} with base point $q$ and parameters $a=r_{ext}$ and $c=r_{ext}-r_{int}$. Since $a>c$, $\hl$ has two loops by Lemma \ref{lemma lima}. Moreover, as shown in the proof of Lemma \ref{stima loop}, the smaller loop of $\hl$ is bounded by the sphere of radius $a-c=r_{int}$, tangent to $\Gamma$ in $q$, and such sphere is contained  in $\Omega$.

Furthermore, for any $p$ in the smaller loop of $\hl$, the vertical rectangle $\overline{qp}\times(0,2t_0)\subset int(W)$. The result will follow taking $r_{min}$ as the largest radius of a disk bounded by the smaller loop of $\hl$. Note that $r_{min}$ depends only on $a$ and $c$, i.e. only on the curvature of $\Gamma$, and not on $q$. 

Finally, since $\Gamma$ satisfies the pinching $2r_{int}>r_{ext}$, by Lemma \ref{stima loop} it holds that
$
2r_{int}-r_{ext}< r_{min}< r_{int}.
$
\cvd

\begin{Remark}\label{sigma}
\emph{By the construction described in the proof of Lemma \ref{lemmaRR} and by Lemma \ref{stima loop}-2, the centers of $\mathcal C_{ext}$, $\mathcal C$ and $D(r_{min})$ coincide. Up to isometry we can suppose that it is $(\sigma,0)\in\hh^n\times\rr$ where $\sigma$ is the origin of $\hh^n$.}
\end{Remark}

\begin{proof}[Proof of Theorem \ref{theorem-main}]

The strategy is to prove that $M$ is the union of components, each graphical above some domain.  At the end of the proof it will be clear that this determines the  fact that $M$ is topologically a disk.
Denote by $h_M$ the maximal height of $M$ above the plane $P:=\hh^n\times\{0\}.$ Fix any  point $A\in \Gamma$ and consider a disk $D(r_{ext})$ of radius $r_{ext}$ tangent to $\Gamma$ at $A.$  We divide the proof into two cases depending on whether $M$ is tall or short, as defined in  Definition \ref{def short tall}.

\vspace{12pt}

\begin{center} \noindent{\bf Case  1 ($M$ short):} $h_M<2h_{\mathcal C}.$
\end{center}

{\bf Claim 1A.} $M$ is contained in $D(r_{ext}+R_{\mathcal C})\times [0,2h_{\mathcal C}[$.\\

{  Consider the  horizontal cylinder ${\mathcal C}_H$ and let $\pi$ be hyperplane defined in Section 
\ref{hor-cyl}. 

Denote by $\Pi$ the vertical hyperplane containing $\pi.$ Denote by ${\mathcal C}^+_H$  the intersection of  ${\mathcal C}_H$ with one of the two halfspaces determined by $\Pi.$ 
Notice that, $\partial {\mathcal C}^+_H$ is the union of two hyperplanes,  each contained in a slice. Up to a vertical translation we may assume that  the lower boundary is on the slice $t=0$ and   the upper boundary  is  in the slice $t=2h_{\mathcal C}$.   
  By abuse of notation, we will call ${\mathcal C}^+_H$  any horizontal translation and any horizontal rotation of ${\mathcal C}^+_H.$}
Since $M$ is compact, we can  translate horizontally  ${\mathcal C}^+_H$ such that  $M\cap {\mathcal C}^+_H=\emptyset$ and $M$ lies in the side of ${\mathcal C}^+_H\cap\left(\hh^n\times [0,2h_{\mathcal C}]\right)$ which contains the axis of ${\mathcal C}_H$. Then, we translate ${\mathcal C}^+_H$  towards $M$ and  by the maximum principle,  ${\mathcal C}^+_H$ and $M$ cannot meet at an interior point. 
As $h_M<2h_{\mathcal C},$ one can translate ${\mathcal C}^+_H$ till its lower boundary on $t=0$ touches  the boundary of $D(r_{ext}).$ 
The same can be done for ${\mathcal C}^+_H$ with any horizontal axis in the slice $t=h_{\mathcal C},$ hence $M$ is contained in $D(r_{ext}+R_{\mathcal C})\times [0,2h_{\mathcal C}[$.\\

{\bf Claim 1B.} For $\Gamma$ sufficiently small,  $M$ is contained in the vertical cylinder above $\Omega$.\\
 By Lemma \ref{rs vs rc}, we know that, for any $n\geq 2$ and any $H>\frac{n-1}{n}$, $R_{\mathcal S}>R_{\mathcal C}$ holds. Choose $\Gamma$ small enough such that $R_{\mathcal S}>r_{ext}+R_{\mathcal C}.$ Let $\mathcal S_H$ be  the sphere with constant mean curvature $H$, translate it vertically such that $\mathcal S_H\subset\hh^n\times[-h_{\mathcal S},h_{\mathcal S}]$ and denote by ${\mathcal S}_H^+$ the part contained in $t>0$. Translate $\mathcal S_H^+$ vertically such that it is above $M$. Then translate ${\mathcal S}_H^+$ down. 
By the maximum principle, there can not be an  interior contact point between $M$ and  the translation of ${\mathcal S}_H^+.$ Moreover, as $M\subset D(r_{ext}+R_{\mathcal C})\times [0,2h_{\mathcal C}[$ and $R_{\mathcal S}>r_{ext}+R_{\mathcal C},$ the boundary of  ${\mathcal S}_H^+$ will not meet $M$ before coming back to $t=0$. Hence $M$ is below ${\mathcal S}_H^+$.
As $R_{\mathcal S}>r_{ext},$ by translating  horizontally ${\mathcal S}_H^+$ one can touch all the points of $\Gamma$ with ${\mathcal S}_H^+\cap\{t=0\}.$ By the maximum principle, $M$ stays below all such  translations of ${\mathcal S}_H^+.$ That is $M$ is contained in the vertical cylinder above $\Omega.$\\

This concludes the proof of Theorem \ref{theorem-main} in the case  $M$ short,  in fact since Claim 1B holds, using Alexandrov reflections with horizontal hyperplanes, it is easy to prove that   $M$ is a graph over $\Omega$, hence $M$ is topologically a disk. Moreover we have that $\delta(n,H)=R_{\mathcal S}-R_{\mathcal C}$. Notice that, in the case $M$ short, we do not use the assumption on the pinching for $\Gamma.$ 

\vspace{12pt}

\begin{center} \noindent{\bf Case  2 ($M$ tall):} $h_M\geq 2h_{\mathcal C}.$
\end{center}

Using Alexandrov reflection with horizontal and vertical hyperplanes, one gets that the part of $M$ above the plane $t=\frac{h_M}{2}$ is a graph as well as the part of $M$ outside the cylinder over $\Omega.$ Then, we have only to understand the topology of $M\cap \Omega\times[0, \frac{h_M}{2}].$ In what follows we will show that there is no point of $M$ in $\Omega\times[h^*, \frac{h_M}{2}].$ Actually we get event more: there is no point of $M$ in $D(R)\times [h^*, \frac{h_M}{2}],$ where $R$ will be fixed later and $\Omega\subset D(R).$ Then, we use the latter to prove that there is no interior point of $M$ in $\Omega\times[0, h^*].$ 

{ The bound $\delta(n,H)$  on the  size of $\Gamma$ will be determined by  a careful choice of the parameter $d$ of the family of immersed annuli ${\mathcal A}_d$ described in  Section \ref{rotaz}.
 Let us explain first how the choice  of $d$ affects the other quantities involved in the proof.} Fix any $d<0$ such that
\begin{equation}\label{restrizione r0}
2r_{int}-r_{ext}<r_{0,d}<r_{min},
\end{equation}
where $r_{0,d}$ is the solution of the equation \eqref{r0d} and $r_{min}$ is the radius found in Lemma \ref{lemmaRR}.  The reasons of the bounds in \eqref{restrizione r0} will be clear in the following.
The choice of $d$ determines the hypersurface $\mathcal A_d$ discussed in Section \ref{rotaz}, together with the radius $r_{1,d}$ {\em i.e.} the solution of equation \eqref{r1d}.  Consequently,  the height $h^*$ defined in \eqref{h*}, $\rho^*_H$, the radius of the {\em spherical cap} of $\mathcal S_H$ of height $h^*$, i.e. the solution of equation \eqref{rH*} depend on the choice of $d$ as well.

We point out that the choice of $d$ is determined by $\Gamma$ through the inequality \eqref{restrizione r0}, in particular if $r_{ext}\rightarrow 0$, which means that $\Gamma$ shrinks to a point, then $d\rightarrow 0$. Moreover, when $d\rightarrow 0,$ then  all  the radii and height $h^*$ tend to zero by \eqref{eq:rh} and Lemma \ref{raggi D}. It follows that if $r_{ext}$ is small enough we have $h^* \ll 2h_{\mathcal C}.$ 
Furthermore,  
 since $\Gamma$ is compact and pinched, then there exists an $\varepsilon>0$ such that $2r_{int}\geq(1+\varepsilon)r_{ext}$. By Lemma \ref{lemmaRR}, we have 
$$
\varepsilon r_{ext}\leq2r_{int}-r_{ext}<r_{0,d}
$$
hence $\frac{\rho_H^*}{r_{ext}}>\varepsilon\frac{\rho_H^*}{r_{0,d}}$. Finally, taking $d$, and hence $\Gamma$, smaller if necessary,  by Lemma \ref{raggi D} one has
\begin{equation}\label{stima1}
\rho_H^*>3r_{ext}.
\end{equation}

{\bf Claim 2A.} There is a point of $p\in M$ on the hyperplane $\{t=h_M-h^*\}$, at distance at least $\rho^*_H$ from the $t$-axis.\\
Using Alexandrov reflection with respect to horizontal hyperplanes, we can prove that the reflections with respect to $\{t=\frac{h_M}{2}\}$ of the points at maximal height of $M$ belong to the closure of $\Omega$. Up to horizontal isometries we can suppose that one of these points belongs to the $t$-axis, that is, it is of the form $(\sigma, h_M)$ where $\sigma$ was defined in Remark \ref{sigma}.
Denote by $M'$ the part of $M$, above the plane $\{t=h_M-h^*\}$. $M'$ is a graph of height $h^*$. Assume, by contradiction that the distance between $\partial M'$ and the $t$-axis is smaller than $\rho_H^*$. Cut $\mathcal S_H$ with a suitable horizontal hyperplane such that $\mathcal S_H'$, the spherical cap above this plane, has height $h^*$. Translate  $\mathcal S_H'$ up until $\mathcal S_H'\cap M=\emptyset$, then move it down. By the maximum principle there is no interior contact point between the spherical cap and $M'$ till  the boundary of $\mathcal S_H'$ reach the height $t=h_M-h^*.$ Then, $M'$ has height less than  $h^*.$ The latter is a contradiction.\\

{\bf Claim 2B.} The compact domain bounded by $M\cap \{t=h_M-h^*\}$ bounds a disk $D(R)$ with $R>\rho^*_H-3r_{ext}$ and center $(\sigma,h_M-h^*)$.\\
Enclose $\Gamma$ with the geodesic sphere $\mathcal C_{ext}$ defined in Lemma \ref{lemmaRR}, reflect the point $p$ found in Claim 2A with respect to any vertical hyperplane tangent to $\mathcal C_{ext}$ in $\hh^n\times\rr$. In this way we have a hyperbolic Lima\c con $\hl$ in $\{t=h_M-h^*\}$ defined by the base point $p$ and the vertical translation of $\mathcal C_{ext}$ at height $h_M-h^*$. Therefore its parameters are $a>\rho^*_H-r_{ext}$ and $c=r_{ext}$. 
  In fact the distance between $p$ and $(\sigma,h_M-h^*)$ is larger than $\rho^*_H,$ where $\sigma$ was defined in Remark \ref{sigma}. Estimate \eqref{stima1} implies that $a>2c>c$, hence, by Lemma \ref{lemma lima}, $\hl$ has two loops. Arguing as in the proof of Lemma \ref{lemmaRR}, we have that the smaller loop is contained in $W$. The claim follows by Lemma \ref{stima loop} and Remark \ref{sigma}.\\

{\bf Claim 2C.} $M \cap \{ D(R) \times  [h^*, h-h^*] \}= \emptyset$.\\
By Claim 2B, $D(R) \subset W$, and by our choice of $h^*\ll h_{\mathcal S}$, the plane $\{t=h_M-h^*\}$ is above the plane $\{t=h_M/2\}$.
By doing Alexandrov reflection  with horizontal planes, the reflection $D^*(R)$ of $D(R)$ with respect to $\{t=\tau\}$, will be contained in $W$, for all $\tau\in [h_M/2-h^*, h_M]$. Therefore, $M \cap \{ D(R) \times  [h^*, h_M-h^*] \}= \emptyset$.\\

{\bf Claim 2D.}  $M\cap\{0\leq t \leq h^*\}$ is outside the cylinder $\Omega \times \{ 0 \leq t \leq h^*\}$.\\
 Denote by $\Sigma$ the embedded part of the annulus ${\mathcal A}_d$ contained in $ (D(r_{1,d})\setminus D(r_{0,d}))\times [0,h^*]$. By construction, the hypersurface $\Sigma$ has two boundary components, denoted  by $C_0$ and $C_{1}$, both  geodesic hyperspheres in the hyperbolic space. In particular $C_0 \subset P$ and $C_{1} \subset \{t=h^*\}$. By inequality \eqref{h*}, we have that the radius of $C_1$ is smaller than $r_{1,d}$. Moreover, up to horizontal translations we can suppose that the center of $C_0$ is $(\sigma,0)$.
By the pinching of $\Gamma$ and Lemma \ref{raggi D}, we can take $\Gamma$ small enough such that $R>r_{1,d}$ holds, where $R$ is the radius found in Claim 2B. 
Therefore, by Claim 2C, we can translate vertically $\Sigma$ such that $\Sigma\subset W$. 
We recall that our choice in  \eqref{restrizione r0}  yields $r_{0,d}<r_{min}\leq r_{int}$ and that the mean curvature of $\Sigma$ is strictly smaller than that of $M$. By the maximum principle it follows that we can translate $\Sigma$ down, until $C_0$ reaches $P$ again, without having an interior contact point between $\Sigma$ and $M$. Moreover by translating $C_0$ in $\Omega$, we can touch every point of $\Gamma,$ while $C_0$ remains in $\Omega$. Furthermore, by Claim 2B, the pinching of $\Gamma$ and Lemma \ref{raggi D}, we can take $|d|$ small enough such that
\begin{equation}\label{eq:R} 
R>r_{ext}+r_{1,d}-r_{0,d}.
\end{equation}
In fact by Claim 2B we have that $R>\rho^*_H-3r_{ext}$ and $\rho^*_H-3r_{ext}>r_{ext}+r_{1,d}-r_{0,d}$ holds because, when $d\rightarrow 0$, the left-hand side tends to zero slower than the right-hand side.

Claim 2C and \eqref{eq:R} imply that when translating $C_0$ inside $\Omega$, the circle $C_1$ remains inside the disk $D^*(R) \subset \{t=h^*\}$, hence the upper boundary of $\Sigma$ will not touch $M.$
Notice also that $\Sigma$ is a vertical graph over the exterior of $D(r_{0,d})$, hence, translating horizontally $C_0$, $\Sigma$ and $M$ cannot meet at an interior point otherwise we would have a contradiction with the maximum principle. Therefore $M \cap ( \Omega\times [0,h^*] ) \neq \emptyset$.\\

This completes the proof of Theorem \ref{theorem-main}.
\end{proof}

\section*{Appendix: list of notations}

To simplify the reading we summarize the principal notations introduced. 

\begin{itemize}
\item[1)] Hypersurfaces of $\hh^n\times\rr$:
\begin{itemize}
%\item[$\mathcal L$] Limacon iperbolica;
\item[$\mathcal S_H$:] compact rotationally symmetric hypersurface with constant mean curvature $H>\frac{n-1}{n}$, see Section \ref{rotaz}; 
\item[$\mathcal S_{\frac{n-1}{n}}$:] complete rotationally symmetric entire graph with constant mean curvature $H=\frac{n-1}{n}$, see Section \ref{rotaz}; 
\item[$\mathcal A_{d}$:] rotationally symmetric annulus with self intersection and constant mean curvature $H=\frac{n-1}{n}$, see Section \ref{rotaz};\\
\item[$\mathcal C_H$:] horizontal cylinder with constant mean curvature $H>\frac{n-1}{n}$, see Section \ref{hor-cyl}.

%\item[$\Sigma$] porzione di ${\mathcal A}_d$ tra $r_{0,d}$ e $r_{1,d}$; {\color{blue} dobbiamo ancora definire i raggi}
%\item[$C_0$] bordo inferiore di $\Sigma$;
%\item[$C_1$] bordo superiore di $\Sigma$\\
\end{itemize}

\item[2)] {Heights:} 
\begin{itemize}
\item[$h_{\mathcal S}$:] height of the half sphere $\mathcal S_{H}$, see Section \ref{rotaz};
\item[$h^*$:] approximated value of the height of the portion of $\mathcal A_d$ between $r_{0,d}$ and $r_{1,d}$, see \eqref{h*};
\item[$h_{\mathcal C}$:] height of half of the cylinder $\mathcal C_H$, see Section \ref{hor-cyl};
\item[$h_M$:] height of $M$.
\end{itemize}

\item[3)] Radii:
\begin{itemize}
\item[$R_{\mathcal S}$:] radius of $\mathcal S_{H}$, it is the unique solution of \eqref{rs}; 
\item[$r_{0,d}$:] the minimum radius for which $\mathcal A_d$ is defined, it is the unique solution of \eqref{r0d};
\item[$r_{1,d}$:] the radius for which $\mathcal A_d$ has horizontal tangent plane, it is the unique solution of \eqref{r1d};
%\item[$\rho_H$] raggio di $\mathcal S_{H}$ ad altezza $-\lambda_{{\frac{n-1}{n}},d}(r_h)$;
\item[$\rho^*_H$:] radius of the spherical cap of $\mathcal S_{H}$ with height $h^*$, it is the unique solution of equation \eqref{rH*};
\item[$R_{\mathcal C}$:] radius of $\mathcal C_{H}$, it is the unique solution of \eqref{rc};
\item[$r_{int}$:] interior radius of $\Gamma$;
\item[$r_{ext}:$] exterior radius of $\Gamma$;
\item[$r_{min}:$] the minimum radius on which $M$ is a graph, it is determined by Lemma \ref{lemmaRR}.
%\item[$R$] raggio ``grande'' di $M$ ad altezza $h-h^*$.
\end{itemize}

\end{itemize}

%%%%%%%%%%%%%%%%%%%%%%%%%%%%%%%%%%%%%%%%

\end{document}